\documentclass[a4paper,reqno]{amsart}

\usepackage{amsmath}
\usepackage{amsthm}
\usepackage{amssymb}
\usepackage{eucal}
\usepackage{mathtools}
\usepackage{xcolor}
\usepackage{enumitem}
\usepackage{tikz}
\usepackage{float}
\usepackage[hidelinks]{hyperref}
\usepackage{cleveref}

\numberwithin{equation}{section}
\usetikzlibrary{cd,arrows,arrows.meta}
\tikzset{vertex/.style={shape=circle,draw,minimum size=3pt,inner sep=0pt}}
\tikzset{black-vertex/.style={shape=circle,draw,inner sep=3pt}}
\tikzset{red-vertex/.style={shape=circle,draw=red,dashed,inner sep=3pt}}
\tikzset{edge/.style={->,>=latex'}}
\allowdisplaybreaks
\theoremstyle{plain}

\newtheorem{theorem}{Theorem}[section]
\newtheorem{proposition}[theorem]{Proposition}
\newtheorem{lemma}[theorem]{Lemma}
\newtheorem{corollary}[theorem]{Corollary}
\theoremstyle{definition}
\newtheorem{definition}[theorem]{Definition}

\newtheorem{example}[theorem]{Example}
\theoremstyle{remark}
\newtheorem{remark}[theorem]{Remark}
\newtheorem*{remark*}{Remark}

\AddToHook{env/proposition/begin}{\crefalias{theorem}{proposition}}
\AddToHook{env/lemma/begin}{\crefalias{theorem}{lemma}}
\AddToHook{env/corollary/begin}{\crefalias{theorem}{corollary}}
\AddToHook{env/definition/begin}{\crefalias{theorem}{definition}}
\AddToHook{env/question/begin}{\crefalias{theorem}{question}}
\AddToHook{env/example/begin}{\crefalias{theorem}{example}}
\AddToHook{env/remark/begin}{\crefalias{theorem}{remark}}

\crefname{theorem}{Theorem}{Theorems}
\Crefname{theorem}{Theorem}{Theorems}
\crefname{mainthm}{Theorem}{Theorems}
\Crefname{mainthm}{Theorem}{Theorems}
\crefname{proposition}{Proposition}{Propositions}
\Crefname{proposition}{Proposition}{Propositions}
\crefname{lemma}{Lemma}{Lemmas}
\Crefname{lemma}{Lemma}{Lemmas}
\crefname{corollary}{Corollary}{Corollaries}
\Crefname{corollary}{Corollary}{Corollaries}
\crefname{definition}{Definition}{Definitions}
\Crefname{definition}{Definition}{Definitions}
\crefname{remark}{Remark}{Remarks}
\Crefname{remark}{Remark}{Remarks}

\let\cref\Cref

\newcommand{\modA}{\mathrm{mod}\,A}
\newcommand{\add}{\operatorname{add}}
\newcommand{\Gen}{\operatorname{Gen}}
\newcommand{\Cogen}{\operatorname{Cogen}}
\newcommand{\rad}{\operatorname{rad}}
\newcommand{\soc}{\operatorname{soc}}
\newcommand{\topm}{\operatorname{top}}
\newcommand{\domdim}{\operatorname{domdim}}
\newcommand{\codomdim}{\operatorname{codomdim}}
\newcommand{\gldim}{\operatorname{gldim}}
\newcommand{\pdim}{\operatorname{pdim}}
\newcommand{\idim}{\operatorname{idim}}
\newcommand{\Hom}{\operatorname{Hom}}
\newcommand{\End}{\operatorname{End}}
\newcommand{\Ext}{\operatorname{Ext}}
\newcommand{\dist}{\operatorname{dist}}
\newcommand{\length}{\ell}

\begin{document}
\raggedbottom
\title[Resolution quivers and the QF-1 property]
{Resolution quivers and the QF-1 property of higher
	Auslander Nakayama algebras}
\author{Zhe Han}
\address{School of Mathematics and Statistics, 
    Henan University, 
    Kaifeng, 
    Henan 475004, 
    P. R. China}
\email{zhehan@vip.henu.edu.cn}
\author{Dawei Shen\textsuperscript{*}} 
\thanks{\textsuperscript{*}Corresponding author.}
\address{School of Mathematics and Statistics, 
    Henan University, 
    Kaifeng, 
    Henan 475004, 
    P. R. China}
\email{sdw12345@mail.ustc.edu.cn}
\author{Jiawei Zhou}
\address{School of Mathematics and Statistics, 
    Henan University, 
    Kaifeng, 
    Henan 475004, 
    P. R. China}
\email{txh123456xc@163.com}

\renewcommand{\shortauthors}
{Zhe Han, Dawei Shen, and Jiawei Zhou}

\subjclass[2020]{Primary 16G10; Secondary 16E10, 16E65, 16G20}
\keywords{Nakayama algebra,  resolution quiver, higher
Auslander algebra, QF-1 algebra}
\date{\today}

\begin{abstract}
We give a resolution-quiver-theoretic proof of the QF-1 conjecture 
of Cruz and Marczinzik for higher Auslander Nakayama algebras of 
positive even global dimension. 
The proof combines syzygy calculations in Kupisch coordinates, 
which show that the projective dimensions of quotients of a fixed 
projective module with cyclic top strictly decrease as their lengths increase, 
with a torsion-pair argument and Morita's criterion.
\end{abstract}
\maketitle

\section{Introduction}

QF-1 algebras were introduced by Thrall as a generalization
of quasi-Frobenius algebras \cite{Thrall48}.
Morita established double-centralizer criteria for QF-1
algebras \cite{Morita58}, while Ringel investigated their
structure in global dimension at most two \cite{Ringel73}.
For Nakayama algebras, Fuller obtained criteria for the
QF-1 property in terms of Kupisch series \cite{Fuller68}.
These results suggest that the QF-1 property is closely
related to the structure of projective and injective
modules.

Higher Auslander algebras arise from Iyama's
higher-dimensional Auslander--Reiten theory and its
connection with cluster-tilting modules
\cite{Iyama07AR,Iyama07Corr}.
Recall that a finite-dimensional algebra \(A\) is called
a higher Auslander algebra if
\[
\gldim A=\domdim A=g
\]
for some integer \(g\geq 1\).
For Nakayama algebras, the relationship between global
and dominant dimensions has been studied extensively.
Madsen, Marczinzik and Zaimi obtained bounds for dominant
dimensions and classification results for higher
Auslander Nakayama algebras \cite{MMZ20}.
Ringel studied concave linear Nakayama algebras which
are higher Auslander and highlighted the different
behavior of even and odd global dimensions
\cite{Ringel22}.

Recently, Cruz and Marczinzik \cite{CM} investigated
the QF-1 property of higher Auslander algebras.
They established an Auslander--Buchsbaum type formula
\[
\pdim_A X+\domdim_A X=g
\]
for every finitely generated \(A\)-module \(X\) which
is not projective-injective, where \(A\) is a higher
Auslander algebra of global dimension \(g\).
Cruz and Marczinzik proved that the condition
\[
\pdim_A\tau_g(DA)\leq g-1
\]
is necessary for a higher Auslander algebra of
global dimension \(g\) to be QF-1 \cite{CM}.
They conjectured that this condition is also
sufficient when \(g\) is positive and even.
They verified the conjecture computationally for
linear Nakayama algebras with at most \(14\) simple
modules and cyclic Nakayama algebras with at most
\(12\) simple modules.
They also constructed a counterexample to the
corresponding statement in odd global dimension.

The main result of this paper confirms their
conjecture for all higher Auslander Nakayama
algebras of positive even global dimension.
Together with the necessity established in
\cite{CM}, it gives the following characterization.

\begin{theorem}
	\label{thm:intro-main}
	Let \(A\) be a higher Auslander Nakayama algebra
	of positive even global dimension \(g\). Then \(A\) is \textup{QF-1} if and only if
	\(\pdim_A\tau_g(DA)\le g-1\).
\end{theorem}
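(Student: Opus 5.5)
The direction ``QF-1 implies \(\pdim_A\tau_g(DA)\le g-1\)'' is the necessity result of Cruz and Marczinzik \cite{CM}, so only sufficiency needs proof. For that I use Morita's double-centralizer criterion \cite{Morita58}. Since \(\domdim A=g\ge 2\), the algebra \(A\) is QF-3. Let \(e\) be the idempotent with \(eA\) (equivalently \(Ae\)) the minimal faithful projective-injective module, so that \(A\cong\End_{eAe}(Ae)\). A QF-3 algebra of this kind is QF-1 exactly when every faithful module balances. Via Morita's criterion, and using that \(A\) is Nakayama so all indecomposables are uniserial, this reduces to a concrete statement. It suffices to show that every indecomposable \(A\)-module \(M\) that is not projective-injective either embeds into a projective-injective module in a way compatible with the double centralizer of any faithful module containing it, or else is generated and cogenerated by the projective-injectives \emph{and} by the remaining summands, in a controlled way.

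Concretely, I work in Kupisch coordinates \((c_0,\dots,c_{n-1})\) and describe each uniserial module as a quotient \(P_i/\rad^{k}P_i\). The first key step is a syzygy computation. Fix a projective \(P_i\) that is not injective. Its quotients \(P_i/\rad^{k}P_i\), for \(1\le k<c_i\), form a chain. I will show from the Kupisch series that \(\Omega(P_i/\rad^kP_i)=\rad^kP_i\) and iterated syzygies are computed by explicit index shifts. This gives that \(\pdim(P_i/\rad^kP_i)\) is strictly decreasing in \(k\). Combined with the Auslander--Buchsbaum type formula \(\pdim X+\domdim X=g\) of \cite{CM}, the dominant dimensions of these quotients are strictly increasing. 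Hence the quotients of \(P_i\) of maximal length are the ones closest to the projective-injectives.

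The second step translates the hypothesis \(\pdim\tau_g(DA)\le g-1\). The modules \(\tau_g I\) for non-projective injective indecomposables \(I\) are exactly the non-projective summands of the \(g\)-cluster tilting module other than projectives. The hypothesis says that none of them has projective dimension \(g\), that is, none has dominant dimension \(0\). Using the monotonicity from step one, I expect this to force every simple top module \(S_i\) with \(P_i\) non-injective to appear only in quotients that lie in a torsion class \(\mathcal T=\Gen(Ae)\), or in the torsion-free class \(\Cogen(Ae)\) of the torsion pair determined by the projective-injectives. Evenness of \(g\) enters here. The syzygy shifts alternate between two Kupisch patterns, and only for even \(g\) does the parity place \(\tau_g(DA)\) on the correct side of the torsion pair. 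This matches the odd counterexample in \cite{CM}.

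The final step verifies Morita's criterion. Take a faithful module \(M\); it contains the minimal faithful projective-injective \(Ae\) as a summand up to adding summands. Using the torsion pair, I show that each indecomposable summand of \(M\) outside \(\add Ae\) is cogenerated by \(Ae\) with a cokernel in \(\Gen(Ae)\). The double centralizer of \(M\) then equals that of \(Ae\), which is \(A\). The main obstacle is the first step: getting a clean syzygy formula in Kupisch coordinates that proves strict monotonicity of projective dimension uniformly for both linear and cyclic Nakayama algebras. I would attack it first by induction on \(k\), comparing \(\Omega^2(P_i/\rad^kP_i)\) with \(\Omega^2(P_i/\rad^{k+1}P_i)\) via the map \(P_i/\rad^{k+1}P_i\to P_i/\rad^kP_i\).
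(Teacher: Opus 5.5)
Your final step fails as stated. You want every indecomposable summand of a faithful module outside \(\add(eA)\) to be cogenerated by \(eA\). But \(A\) is not self-injective (as \(0<g<\infty\)), so it has an injective non-projective indecomposable \(I\). An embedding \(I\hookrightarrow(eA)^k\) would split and make \(I\) projective, yet \(I\) is a summand of the faithful module \(A\oplus DA\).

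Morita's criterion (\cref{thm:morita-criterion}) only asks for the disjunction \(\domdim M+\codomdim M\ge1\). For a Nakayama algebra this becomes the concrete condition of \cref{thm:nakayama-qf1-criterion}: for every non-injective indecomposable projective \(P\) and every composition factor \(S\) of \(P\), \(I(S)\) is projective. The reason is that the indecomposables with projective cover \(P\) are the \(P/\rad^kP\), whose socles run through the composition factors of \(P\). You never isolate this condition, so you never separate \(\topm P\) from the factors of \(\rad P\), which need entirely different arguments.

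Neither half has its actual input in your sketch.

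\emph{The top.} For \(\topm P\), the hypothesis is used through \cref{prop:CM-torsion}: \((\Gen(eA),\Cogen\tau_g(DA))\) is a torsion pair whose torsion-free class has dominant dimension at least \(1\). Moreover \(\topm P\notin\Gen(eA)\), because an indecomposable projective-injective \(Q\) with \(\Hom_A(Q,\topm P)\neq0\) is a projective cover of \(\topm P\), hence \(Q\cong P\). Your ``\((\Gen(Ae),\Cogen(Ae))\)'' is not even a torsion pair, since \(\Hom_A(eA,eA)\ne0\). The ``I expect'' is exactly where the hypothesis should have been used.

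\emph{The radical.} Strict monotonicity of \(\pdim(P/\rad^kP)\) cannot come from Kupisch index shifts alone. The formula \(\Omega^2M[a,b)\cong M[F(a),F(b))\) holds for every Nakayama algebra, yet for Kupisch series \((4,3,3,2,1)\) the non-injective \(P_1\) has \(\pdim S_1=\pdim P_1/\rad^2P_1=2\). What forces monotonicity is Shen's description of \(R(A)\) in even self-injective dimension (\cref{thm:Shen-characterization}): cyclic vertices are black with at most two predecessors, and non-cyclic vertices have at most one.

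The same facts show that \(\topm P_i\) is cyclic whenever \(P_i\) is non-injective, because \(F(i-1)=F(i)\). Hence the factors \(S_j\) of \(\rad P_i\) are non-cyclic. Writing \(P_i=P_{x_r}\), the sequence \(0\to S_j\to U_{r,j+1}\to U_{r,j}\to0\) gives \(\pdim S_j=2d(j)-1\le 2m-1\). The Auslander--Buchsbaum type formula then yields \(\domdim S_j\ge1\). This, not an alternation of ``Kupisch patterns'', is where the evenness of \(g\) enters.
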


The proof combines the torsion-theoretic approach
of Cruz and Marczinzik with a homological analysis
of resolution quivers.
Under the assumption on \(\tau_g(DA)\), their
torsion-pair result provides information about
the injective envelopes of certain simple modules.
On the other hand, Morita's criterion reduces the
QF-1 property to conditions involving the dominant
and codominant dimensions of indecomposable modules.
For Nakayama algebras, these conditions can be
expressed in terms of the composition factors of
projective non-injective modules.
The main difficulty is therefore to connect the
homological information about simple modules with
the composition series of these projective modules.

The resolution quiver provides a combinatorial tool for this connection.
For a Nakayama algebra with Kupisch series \((c_i)\),
where \(c_i=\length(P_i)\) for the indecomposable
projective module \(P_i\) with top \(S_i\), define
the integral Gustafson function by
\[
F(i)=i+c_i.
\]
Its reduction modulo the number of isomorphism classes of simple modules
determines the resolution quiver.
Resolution quivers have been studied by Ringel
\cite{Ringel13}, Shen \cite{ShenRQ,Shen17},
and Hanson and Igusa \cite{HI20}.
More recently, Shen characterized minimal
Auslander--Gorenstein Nakayama algebras in terms
of the shape and coloring of their resolution
quivers \cite{ShenMAG}.
In even self-injective dimension, this characterization
imposes strong restrictions on the predecessors
of cyclic and non-cyclic vertices.
These restrictions apply to higher Auslander
Nakayama algebras of positive even global dimension.

The key homological ingredient is the 
syzygy calculation in Kupisch coordinates.
For the uniserial module \(M[a,b)\) with top
\(S_a\) and length \(b-a\), one has
\[
\Omega^2 M[a,b)\cong M[F(a),F(b))
\]
whenever the relevant syzygies are defined.
This formula follows from known syzygy computations
for Nakayama algebras; see
\cite[Section~2.1]{Mar18} and the related
descriptions in \cite{KMM26,Ringel13}.
Our argument combines this formula with Shen's
restrictions on the resolution quiver to obtain
a strict homological monotonicity statement.

More precisely, for quotients of a fixed
indecomposable projective module with cyclic top,
we show that their projective dimensions strictly
decrease as their lengths increase.
We also derive formulas for the projective
dimensions of simple modules corresponding to
non-cyclic vertices.
These results allow us to control the composition
factors of projective non-injective modules.
Together with the torsion-pair argument, they
yield the required conditions in Morita's
criterion and hence prove the main theorem.

After this paper was completed, we learned from R.~Marczinzik that
Cruz, Marczinzik and their coauthors have independently obtained a proof 
of the QF-1 conjecture for Nakayama algebras by a different method, which uses a
parity relation between the dominant and injective dimensions of the
algebra; see \cite{CM-QF1}.

The paper is organized as follows.
Section~2 recalls Morita's criterion and the
torsion-theoretic preliminaries.
We study the integral Gustafson function
and the structure of resolution quivers in Section~3.
Section~4 establishes the homological dimension
formulas and the strict monotonicity result.
Finally, Section~5 combines these results to
prove Theorem~\ref{thm:intro-main}.

\section{Preliminaries}
\label{sec:nakayama-qf1}

Throughout the
paper, $k$ is an algebraically closed field, $A$ is a finite-dimensional
basic connected $k$-algebra, and all modules are finite-dimensional
right $A$-modules unless otherwise stated.  

Let
\[
0\longrightarrow M\longrightarrow I^0\longrightarrow I^1\longrightarrow
\cdots
\]
be a minimal injective coresolution.  The dominant dimension of \(M\) is
\[
\domdim_A M
=
\inf\{r\ge0\mid I^r\text{ is not projective}\},
\]
with value \(\infty\) if every \(I^r\) is projective.  In particular,
\(\domdim_A M\ge1\) if and only if \(I(M)\) is projective, where \(I(M)\) denotes the injective envelope of \(M\).

Dually, if
\[
\cdots\longrightarrow P_1\longrightarrow P_0\longrightarrow M\longrightarrow0
\]
is a minimal projective resolution, we define
\[
\codomdim_A M
=
\inf\{r\ge0\mid P_r\text{ is not injective}\}.
\]
Equivalently, \(\codomdim_A M=\domdim_{A^{\mathrm{op}}}D(M)\).
Thus \(\codomdim_A M\ge\nobreak1\) if and only if \(P(M)\) is injective,
where \(P(M)\) denotes the projective cover of \(M\).

A finite-dimensional algebra $A$ is called a
\emph{higher Auslander algebra} if
\[\gldim A=g=\domdim A<\infty\]
for some $g\ge1$. 
For an \(A\)-module \(M\), 
the action induces a canonical homomorphism
\[v_M\colon A^{\mathrm{op}}\longrightarrow \End_{\End_A(M)}(M).\]
The module \(M\) is called \emph{balanced} if $v_M$ is
surjective.  
A finite-dimensional algebra \(A\) is called \emph{QF-1} if every faithful \(A\)-module is balanced. For more information on QF-1
algebras, we refer to
\cite{Thrall48,Morita58,Ringel73,Tach73}; see also
\cite[Section~2.2]{CM}.

We recall the following criterion of Morita \cite{Morita58, Yamagata96}.

\begin{theorem}
	\label{thm:morita-criterion}
	Let \(A\) be a finite-dimensional algebra with \(\domdim A\ge2\). Then
	\(A\) is \textup{QF-1} if and only if every indecomposable \(A\)-module \(M\) satisfies
	\[
	\operatorname{domdim} M+\operatorname{codomdim} M\geq 1.
	\]
\end{theorem}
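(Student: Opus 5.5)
The plan is to prove Morita's criterion through the standard description of QF-1 algebras of dominant dimension at least two, namely those whose minimal faithful module is balanced, and to translate "balanced" into statements about projective-injective covers and hulls. Since \(\domdim A\ge2\), the minimal faithful module is \(Q=eA\), the direct sum of the indecomposable projective-injective modules, and \(A\cong\End_{eAe}(eA)\) (double centralizer property). The first step is to recall Thrall's/Morita's reduction: every faithful module \(M\) has the form \(M\cong Q\oplus N\) up to multiplicities, because any faithful module over such an algebra contains each indecomposable projective-injective as a direct summand. Multiplicities do not affect balancedness, so I will assume \(M=Q\oplus N\). With \(B=\End_A(M)\), balancedness of \(M\) means that each \(B\)-endomorphism of \(M\) is given by the action of an element of \(A\).

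Second step. Let \(\varphi\in\End_B(M)\). Since \(\varphi\) commutes with the idempotent projecting onto \(Q\), it restricts to an \(\End_A(Q)\)-endomorphism of \(Q\), and the double centralizer property of \(Q\) then gives an element \(a\in A\) that acts on \(Q\) as \(\varphi\) does. Replacing \(\varphi\) by \(\varphi-\rho_a\) reduces the question to the following: a \(B\)-endomorphism \(\psi\) of \(M\) that vanishes on \(Q\) must be zero. This \(\psi\) commutes with all maps \(Q\to N\) and \(N\to Q\), so it kills the trace \(\tau_Q(N)\) of \(Q\) in \(N\) and induces a map \(N\to N\) whose image lies in the common kernel of all maps \(N\to Q\). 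Choose an indecomposable summand \(N'\) of \(N\) with \(\domdim N'=0\) and \(\codomdim N'=0\). For such a summand, the trace of \(Q\) in \(N'\) lies inside \(\rad N'\), since otherwise some summand of \(Q\) would map onto a summand of \(N'/\rad N'\) and \(P(N')\) would have an injective summand. Dually, the reject of \(Q\) in \(N'\) contains \(\soc N'\). From this one can write down a nonzero \(\psi\): the composite of \(N'\to\topm N'\), a simple module \(S\) that is a summand of the top of \(N'\) and whose projective cover is not injective, and \(S\) mapped to a socle summand of \(N'\) whose injective envelope is not projective. This \(\psi\) commutes with \(B\) exactly because it factors through simples that are invisible to \(Q\). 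Applying this to \(M=Q\oplus N'\), which is faithful, shows that a violation of the inequality forces \(A\) to fail QF-1. That proves the "only if" direction.

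Third step, the converse. Assume every indecomposable \(N\) satisfies \(\domdim N\ge1\) or \(\codomdim N\ge1\), so that \(N\) embeds in a projective-injective module or is a quotient of one. Reduce to \(N\) indecomposable. In the first case \(N\subseteq Q^m\), and \(\psi\) is determined by its compositions with the maps \(N\to Q\). These compositions equal \(f\psi=\psi f=0\) on \(Q\), so \(\psi=0\). In the second case \(Q^m\twoheadrightarrow N\), and \(\psi\) restricted to the images of the maps \(Q\to N\) is zero, hence \(\psi=0\). Summands of mixed type are handled summand-wise, since \(\psi\) commutes with the projections in \(B\). Therefore \(\psi=0\) and \(M\) is balanced.

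The main obstacle is the "only if" construction. One must check carefully that the chosen simple \(S\) appears both in the top and in the socle of one indecomposable \(N'\), and that the resulting map commutes with every element of \(B\). Morphisms \(N'\to Q\to N'\) could conceivably interfere, so I will need to handle them. If this becomes messy, I would instead cite Morita's original argument \cite{Morita58,Yamagata96}, which is the reference the paper itself uses.
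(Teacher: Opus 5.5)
Your reduction and your ``if'' direction are sound. Faithful modules contain $Q=eA$ as a summand, $Q$ is balanced because $\domdim A\ge2$, and $M=Q\oplus N$ is balanced if and only if every $\psi\in\End_B(M)$ with $\psi|_Q=0$ vanishes. Such a $\psi$ preserves each indecomposable summand $N'$, kills the trace $t(N')$ and lands in the reject $r(N')$. Since $\codomdim N'\ge1$ means $t(N')=N'$ and $\domdim N'\ge1$ means $r(N')=0$, the converse follows. The paper itself gives no proof here; it only cites Morita and Yamagata.

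The gap is in the ``only if'' direction. You look for $\psi|_{N'}$ as an $A$-linear map $N'\to S\to N'$ through a simple $S$ that lies in both the top and the socle of $N'$. Such an $S$ need not exist, and even when it does, nothing makes this map commute with $\End_A(N')$. Take the non-QF-1 algebra with Kupisch series $(3,3,3,4)$ mentioned in the paper. The module $N'=M[0,2)$ has $\domdim N'=\codomdim N'=0$, top $S_0$, socle $S_1$, and $\End_A(N')=k$. So no nonzero $A$-endomorphism of $N'$ kills $t(N')=\soc N'$. Your claim $t(N')\subseteq\rad N'$ is also false in general, since $\codomdim N'=0$ only says that \emph{some} summand of $P(N')$ is non-injective. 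Dually, $\soc N'\subseteq r(N')$ can fail.

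The missing idea is that $\psi$ need only be $B$-linear, not $A$-linear. On $N'$ the conditions are exactly three: $\psi$ is linear over $E=\End_A(N')$, kills $t(N')$, and lands in $r(N')$. Both $t(N')$ and $r(N')$ are $E$-submodules, $N'/t(N')\neq0$, $r(N')\neq0$, and $E$ is local with $E/\rad E\cong k$. Hence the composite
\[
N'\twoheadrightarrow N'/t(N')\twoheadrightarrow\topm_E(N'/t(N'))\to\soc_E r(N')\hookrightarrow N',
\]
with any nonzero $k$-linear middle map, is a nonzero $E$-linear map. Then $0\oplus\psi$ is a nonzero element of $\End_B(Q\oplus N')$ that vanishes on the faithful module $Q$, so it is not induced by $A$. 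In the example above, this is the $k$-linear map sending the top of $M[0,2)$ to its socle.
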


For an \(A\)-module \(X\), write \(\Gen X\) and \(\Cogen X\) for the
classes of modules generated and cogenerated by \(X\), respectively.  Recall
that a pair \((\mathcal T,\mathcal F)\) of full subcategories of \(\modA\) is a torsion pair if \(\Hom_A(\mathcal T,\mathcal F)=0\) and every module
\(M\) fits into a canonical exact sequence
\[
0\longrightarrow t(M)\longrightarrow M\longrightarrow M/t(M)\longrightarrow0
\]
with \(t(M)\in\mathcal T\) and \(M/t(M)\in\mathcal F\).  In particular, a
simple module belongs to exactly one of the two classes.

We will use the following result on torsion pairs; see {\cite[Proposition~4.4]{CM}}.
\begin{proposition}
	\label{prop:CM-torsion}
	Let \(A\) be a higher Auslander algebra with
global dimension \(g\ge2\), and assume that
	\(\pdim_A\tau_g(DA)\le g-1\).
	Let \(e \in A\) be an idempotent such that
	\(\add(eA)\) is the category of projective-injective 
	\(A\)-modules. Then
	\[
	\bigl(\Gen(eA),\Cogen\tau_g(DA)\bigr)
	\]
	is a torsion pair in \(\modA\), and every module in
	\(\Cogen\tau_g(DA)\) has dominant dimension at least \(1\).
\end{proposition}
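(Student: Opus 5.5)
The plan is to produce the torsion pair from the trace of \(eA\), identify its torsion-free class with \(\Cogen\tau_g(DA)\), and then read off the dominant dimension statement from the Auslander--Buchsbaum type formula of \cite{CM}. Write \(T=\tau_g(DA)\) and \(\mathcal F=\{M\in\modA\mid Me=0\}=\{M\mid \Hom_A(eA,M)=0\}\).

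\emph{Step 1: the trace pair.} Since \(eA\) is projective, \(\Gen(eA)\) is closed under quotients. It is also closed under extensions. Indeed, given \(0\to L\to M\to N\to 0\) with \(L,N\in\Gen(eA)\), an epimorphism \((eA)^m\to N\) lifts to \(M\) by projectivity, and together with \((eA)^n\to L\) it gives an epimorphism onto \(M\). Hence the trace \(t(M)=MeA\) defines a torsion pair \((\Gen(eA),\mathcal F)\). The first claim is therefore equivalent to the equality \(\mathcal F=\Cogen T\).

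\emph{Step 2: \(\Cogen T\subseteq\mathcal F\).} Since \(\mathcal F\) is closed under submodules and finite direct sums, it suffices to show \(Te=0\). Because \(\gldim A=g\), the module \(\Omega^{g-1}X\) has projective dimension at most \(1\). Dualising a minimal projective presentation of \(\Omega^{g-1}X\) then gives \(\operatorname{Tr}\Omega^{g-1}X\cong\Ext^g_A(X,A)\) up to projective summands. These summands do not matter after applying \(D\operatorname{Tr}\), so \(T\cong D\Ext^g_A(DA,A)\) as right modules. Consequently
\[
Te\;\cong\;D\bigl(e\Ext^g_A(DA,A)\bigr)\;\cong\;D\Ext^g_A(DA,eA)=0,
\]
because \(eA\) is injective. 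Some care is needed over sides here, since \(\Ext^g_A(DA,A)\) is a left \(A\)-module.

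\emph{Step 3: \(\mathcal F\subseteq\Cogen T\).} This is where \(\domdim A=g\) and \(\pdim_A T\le g-1\) must enter, and I expect it to be the main obstacle. Every \(M\in\mathcal F\) is an \(A/AeA\)-module. Since \(T\in\mathcal F\) by Step 2, it suffices to show that \(T\) contains the injective hull in \(\mathrm{mod}\,A/AeA\) of every simple \(S\) with \(Se=0\). That hull is the largest submodule of \(I(S)\) annihilated by \(e\).

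My first attempt is to compute \(\Ext^g_A(DA,A)\) from the minimal injective coresolution
\[
0\to A\to I^0\to\cdots\to I^{g-1}\to I^g\to0,
\]
in which \(I^0,\dots,I^{g-1}\in\add(eA)\). This gives \(\Ext^g_A(DA,A)\) as the cokernel of \(\Hom_A(DA,I^{g-1})\to\Hom_A(DA,I^g)\). Since \(\Hom_A(DA,eA)\) only involves the projective-injective part, I expect to identify \(T\) with the \(e\)-free part of \(D\Hom_A(DA,I^g)\), that is, with something built from \(I^g\) modulo the image of \(I^{g-1}\). Every simple \(S\) with \(Se=0\) should appear in \(\soc I^g\), because the non-projective injectives are exactly those occurring in the last term when \(\domdim A=\gldim A\). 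This would show that \(T\) cogenerates \(D(A/AeA)\).

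If this direct computation becomes unwieldy, the fallback is to verify the two defining properties of a torsion-free class for \(\Cogen T\). The hypothesis \(\pdim T\le g-1\), via the Auslander--Buchsbaum type formula, forces \(\domdim T\ge1\), hence \(I(T)\in\add(eA)\). I would then use this to show that \(\Cogen T\) is closed under extensions, and compare it with \(\mathcal F\) through the simples, since a simple module lies in exactly one class of a torsion pair.

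\emph{Step 4: dominant dimension.} Let \(X\) be an indecomposable summand of \(T\) that is not projective-injective. The formula \(\pdim_A X+\domdim_A X=g\), together with \(\pdim_A X\le g-1\), gives \(\domdim_A X\ge1\). Projective-injective summands are trivially fine, so \(I(T)\in\add(eA)\). Any \(M\in\Cogen T\) embeds in some \(T^n\), so \(I(M)\) is a summand of \(I(T^n)\in\add(eA)\). Hence \(I(M)\) is projective, i.e. \(\domdim_A M\ge1\).
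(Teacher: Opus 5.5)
Your Steps 1, 2 and 4 are sound. The paper itself gives no proof of this proposition; it quotes \cite[Proposition~4.4]{CM}. Step 3, however, carries the whole torsion-pair claim, and it is only sketched. The route you sketch fails in two places.

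First, sides. The cokernel of $\Hom_A(DA,I^{g-1})\to\Hom_A(DA,I^g)$ computes $\Ext^g_A(DA,A)$ with the right module structure coming from $DA$. But $T$ is the dual of the left structure coming from $A$, which is the one you used in Step~2. So $D\Hom_A(DA,I^g)$ is a left module and cannot be compared with $T$.

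Second, the expectation that every simple $S$ with $Se=0$ occurs in $\soc I^g$ is false under the hypotheses of the proposition. By your own description, $I^g$ consists of non-projective injectives. On the other hand, your Step~4 shows that $I(S)$ is projective for every simple $S\subseteq T$. Concretely, the algebra of \cref{ex:gabriel-a5-radical-square-zero} satisfies all hypotheses. Its only simple with $Se=0$ is $S_4$, since $P_4=S_4$ is the only non-injective projective. There $I(S_4)\cong P_3$, the coresolution of $A$ ends with $I^4=S_0$, and $T=S_4$.

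The fallback is not carried out either. Nothing you state makes $\Cogen T$ extension-closed. Comparing with $\mathcal F$ through simples would work only because $\mathcal F$ is the Serre subcategory generated by the simples with $Se=0$, not because a simple lies in one class of a torsion pair. You would still have to show that each such simple embeds in $T$.

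The hypothesis $\pdim_A T\le g-1$ plays no role in this step; it is needed only in Step~4. A working argument dualizes your first attempt.
\begin{itemize}
\item Take the minimal projective resolution $0\to P_g\to\cdots\to P_0\to DA\to0$. Since $\domdim A^{\mathrm{op}}=\domdim A=g$ (M\"uller), the terms $P_0,\dots,P_{g-1}$ lie in $\add(eA)$.
\item Then $T=\tau\Omega^{g-1}DA=\ker(\nu P_g\to\nu P_{g-1})$ with $\nu=D\Hom_A(-,A)$.
\item Each indecomposable summand of $\nu P_{g-1}$ is $I(S')$ with $S'e\ne0$, so $\Hom_A(\mathcal F,\nu P_{g-1})=0$. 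Hence the largest submodule of $\nu P_g$ annihilated by $e$ lies in $T$, and by Step~2 it equals $T$.
\item This submodule is $\bigoplus_f E_f^{m_f}$. Here $f$ runs over primitive idempotents with $fA$ non-injective, $E_f$ is the injective envelope of $S_f$ in $\mathrm{mod}\,A/AeA$, and $m_f$ is the multiplicity of $fA$ in $P_g$.
\item Finally, $m_f\ge1$ for each such $f$. As $\gldim A<\infty$, $DA$ generates $D^b(\modA)$ as a thick subcategory, so $\Ext^i_A(DA,S_f)\neq0$ for some $i$. Moreover, $fA$ cannot occur in $P_0,\dots,P_{g-1}$.
\end{itemize}
Thus $T$ is an injective cogenerator of $\mathcal F=\mathrm{mod}\,A/AeA$, which gives $\Cogen T=\mathcal F$.
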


\begin{corollary}
\label{cor:CM-top-control}
Let \(A\) be a higher Auslander algebra of global
dimension \(g\ge2\) satisfying
\(\pdim_A\tau_g(DA)\le g-1\).
If \(f\in A\) is a primitive idempotent such that the \(A\)-module \(fA\) is non-injective, then the \(A^\mathrm{op}\)-module \(Af\)
is injective.
\end{corollary}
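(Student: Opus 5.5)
The plan is to translate the statement into a statement about the simple right module \(S:=fA/\rad(fA)\), and then place \(S\) in the torsion pair of \cref{prop:CM-torsion}.

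First, recall the standard duality facts. The right \(A\)-module \(D(Af)=\Hom_k(Af,k)\) is the injective envelope \(I(S)\) of the simple module \(S\). Since \(D\) is a duality between \(\modA\) and \(\mathrm{mod}\,A^{\mathrm{op}}\) exchanging projectives and injectives, \(Af\) is an injective \(A^{\mathrm{op}}\)-module if and only if \(I(S)=D(Af)\) is a projective right \(A\)-module. By the remark after the definition of dominant dimension, this holds if and only if \(\domdim_A S\ge1\). So it suffices to show \(\domdim_A S\ge1\).

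Next I use \cref{prop:CM-torsion}. The hypotheses \(g\ge2\) and \(\pdim_A\tau_g(DA)\le g-1\) give the torsion pair \(\bigl(\Gen(eA),\Cogen\tau_g(DA)\bigr)\). Since \(S\) is simple, it lies in exactly one of the two classes. If \(S\in\Cogen\tau_g(DA)\), the same proposition gives \(\domdim_A S\ge1\), and we are done.

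It remains to rule out \(S\in\Gen(eA)\). Suppose there were an epimorphism \((eA)^n\to S\). Then some indecomposable summand \(Q\) of \(eA\) admits a nonzero map \(Q\to S\). Because \(Q\) is indecomposable projective, hence local, this map factors through \(\topm Q\), so \(\topm Q\cong S\). Projective covers are unique, so \(Q\cong P(S)=fA\). Then \(fA\in\add(eA)\) is projective-injective, contradicting the assumption that \(fA\) is non-injective. Hence \(S\in\Cogen\tau_g(DA)\), which completes the argument.

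I do not expect a serious obstacle. The only points needing care are the identification \(D(Af)\cong I(S)\) for right modules, and the observation that a simple module generated by projective-injectives must have a projective-injective projective cover.
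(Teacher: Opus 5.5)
Your proposal is correct and follows essentially the same route as the paper's proof. You rule out \(\topm(fA)\in\Gen(eA)\) by uniqueness of projective covers, conclude from the torsion pair of \cref{prop:CM-torsion} that \(\topm(fA)\in\Cogen\tau_g(DA)\), and then pass from \(\domdim\ge1\) to projectivity of \(I(\topm(fA))\cong D(Af)\).
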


\begin{proof}
Choose an idempotent \(e \in A\) such that
\(\add(eA)\) is the category of projective-injective 
\(A\)-modules.
Put \(S_f=\topm(fA)\).  We claim that
\(S_f\notin\Gen(e A)\).  Otherwise some indecomposable
projective-injective summand \(Q\) of a finite direct sum of copies of
\(e A\) admits a nonzero map \(Q\to S_f\).  Hence \(\topm Q\cong S_f\), so \(Q\) and \(fA\) are both
projective covers of \(S_f\).  Thus \(Q\cong fA\), contradicting the
non-injectivity of \(fA\).

Since \(S_f\) is simple and the pair $\bigl(\Gen(eA),\Cogen\tau_g(DA)\bigr)$ in \cref{prop:CM-torsion} is a torsion pair,
\(S_f\in\Cogen\tau_g(DA)\).
Hence \(\domdim_A S_f\ge1\), so \(I(S_f)\) is projective.  By standard
duality, \(I(S_f)=I(\topm(fA))\cong D(Af)\).
Thus \(Af\) is injective as an \(A^\mathrm{op}\)-module.
\end{proof}

\section{The integral Gustafson function}
\label{sec:Kupisch-dynamics}
Recall that a module is uniserial if its submodules are linearly ordered by inclusion. 
A finite-dimensional
algebra is called a Nakayama algebra if every
indecomposable module is uniserial.

Let $A$ be a basic connected Nakayama algebra with $n$ isomorphism classes 
of simple modules. We denote a complete set of representatives of the simple $A$-modules by $\{S_{\bar i} \mid \bar i \in \mathbb{Z}/n\mathbb{Z}\}$.
For each $i\in\mathbb Z$, let
$S_i=S_{\bar i}$, $P_i=P(S_i)$, and $c_i=\ell(P_i)$.
The composition factors of $P_i$, from top to socle, are
$S_i,S_{i+1},\ldots,S_{i+c_i-1}$.
The Kupisch series $\{c_i\}$ satisfies
$c_{i+1}\ge c_i-1$ for every $i\in\mathbb Z$.
For the general theory of Kupisch series, see \cite{ARS}.
Here, we extend the Kupisch series periodically to $\mathbb Z$.

\begin{definition}
\label{def:F}
The \emph{integral Gustafson function} associated with the
Kupisch series $\{c_i\}$ is the map
\[
F\colon\mathbb Z\longrightarrow\mathbb Z,
\qquad F(i)=i+c_i.
\]
\end{definition}

\begin{proposition}
\label{prop:F-basic}
The integral Gustafson function \(F\) has the following properties.
\begin{enumerate}[label=\textnormal{(\arabic*)}]
\item \(F\) is non-decreasing and unbounded;
\item \(F(i+n)=F(i)+n\) for every \(i\in\mathbb Z\);
\item \(F\) induces a self-map
\(
\overline F\colon\mathbb Z/n\mathbb Z\longrightarrow\mathbb Z/n\mathbb Z
\).
\end{enumerate}
\end{proposition}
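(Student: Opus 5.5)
The three properties all come from two facts already recorded above: the Kupisch inequality \(c_{i+1}\ge c_i-1\), and the periodicity \(c_{i+n}=c_i\) built into the definition of \(c_i\). The plan is to prove (2) first, then derive (1) and (3) from it and from the inequality.

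For (2), substitute directly into the definition. Since \(S_{i+n}=S_{\overline{i+n}}=S_{\bar i}=S_i\), we have \(P_{i+n}=P_i\) and so \(c_{i+n}=c_i\). Therefore
\[
F(i+n)=i+n+c_{i+n}=i+c_i+n=F(i)+n .
\]

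For (1), monotonicity is a one-step comparison:
\[
F(i+1)-F(i)=1+c_{i+1}-c_i\ge 1+(c_i-1)-c_i=0 .
\]
Induction on the gap then gives \(F(i)\le F(j)\) whenever \(i\le j\). For unboundedness, iterate (2) to get \(F(i+mn)=F(i)+mn\) for every \(m\in\mathbb Z\). As \(m\to+\infty\) this tends to \(+\infty\), and as \(m\to-\infty\) it tends to \(-\infty\). So \(F\) is unbounded in both directions; alternatively, \(c_i\ge1\) gives \(F(i)>i\) directly.

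For (3), suppose \(i\equiv j \pmod n\), say \(j=i+mn\). Iterating (2) gives \(F(j)=F(i)+mn\equiv F(i)\pmod n\). Hence the rule \(\overline F(\bar i)=\overline{F(i)}\) is well defined on \(\mathbb Z/n\mathbb Z\).

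None of these steps is a real obstacle. The one point to state explicitly is that \(c_{i+n}=c_i\) holds because of the periodic extension of the Kupisch series. If one also wants to stress that \(F\) does not depend on the chosen labelling of the simple modules, that is immediate from the same periodicity.
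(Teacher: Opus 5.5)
Your proposal is correct and follows essentially the same route as the paper: monotonicity from the Kupisch inequality via \(F(i+1)-F(i)=1+c_{i+1}-c_i\ge 0\), and both unboundedness and the descent to \(\mathbb Z/n\mathbb Z\) from the identity \(F(i+n)=F(i)+n\), which comes from the periodicity of the Kupisch series. Your alternative remark that \(c_i\ge 1\) gives \(F(i)>i\) only shows that \(F\) is unbounded above, but that is all the statement requires.
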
 

\begin{proof}
The Kupisch inequality gives
\(F(i+1)-F(i)=1+c_{i+1}-c_i\ge0\).  Also
\(F(i+n)=i+n+c_{i+n}=F(i)+n\), which makes \(F\) unbounded and allows it to
descend to \(\mathbb Z/n\mathbb Z\).
\end{proof}

Let \(\pi\colon\mathbb Z\to\mathbb Z/n\mathbb Z\) denote the quotient map.
We have the following commutative diagram:
\[
\begin{tikzcd}[column sep=4.2em,row sep=2.8em]
\mathbb Z \arrow[r,"F"] \arrow[d,"\pi"']
  & \mathbb Z \arrow[d,"\pi"] \\
\mathbb Z/n\mathbb Z \arrow[r,"\overline F"']
  & \mathbb Z/n\mathbb Z.
\end{tikzcd}
\]

Following \cite[Section~2]{ShenMAG}, let $\tau$ denote the
cyclic-order translation on the simple $A$-modules, defined by
$\tau(S_i)=S_{i+1}$, with indices taken modulo $n$.
For cyclic Nakayama algebras, this agrees with the
Auslander--Reiten translation. In the linear case, $\tau$
is understood as the translation induced by the fixed
cyclic ordering. 
The resolution quiver $R(A)$ is the functional digraph of the map
\[
\gamma(S)=\tau\soc P(S)
\]
on the set of isomorphism classes of simple $A$-modules, see \cite{Ringel13}.
Since $\soc P_i=S_{i+c_i-1}$, we have
$\gamma(S_{\bar i})=S_{\overline{i+c_i}}
=S_{\overline F(\bar i)}$,
so $R(A)$ is isomorphic to the functional digraph of $\overline F$.
We call $\overline F$ the \emph{Gustafson function} as in
\cite[Definition~1.3, Remark~1.4]{HI20}.

Following \cite[Section~3]{Ringel13}, a vertex is \emph{red} if its
projective dimension is one and \emph{black} otherwise. A vertex is
\emph{cyclic} if it lies on an oriented cycle of \(R(A)\)
\cite[Section~1]{ShenRQ}, and a \emph{leaf} if it has no predecessor.
For a vertex \(S\), let
\[
\dist(S)=\min\{r\ge0\mid\gamma^r(S)\text{ is cyclic}\},
\qquad
\dist(A)=\max_S\dist(S).
\]
For an integer representative \(a\), we abbreviate
\(d(a)=\dist(S_{\bar a})\). Thus \(d(a)\) is the distance of
\(S_{\bar a}\) to the oriented cycle of its component.

If \(C\) is an oriented cycle of \(R(A)\), define
\[
\operatorname{wt}(C)=\frac{1}{n}\sum_{S\in C}\length P(S).
\]
This is the cycle weight from \cite[Definition~1.5]{HI20}.  Its integrality
and independence of the chosen cycle are proved in
\cite[Proposition~1.1]{ShenRQ}; we denote the common
value by \(\operatorname{wt}(R(A))\).

We shall use the following facts
\cite[Lemmas~2.1, 2.5]{ShenMAG} and \cite[Lemma~3.1]{Mad05}.

\begin{lemma}
\label{lem:Shen-basic}
Let $S_i$ be a simple $A$-module. Then
\begin{enumerate}[label=\textnormal{(\arabic*)}]
\item \(P_i\) is injective if and only if \(S_{i-1}\) is black, equivalently, \(F(i)>F(i-1)\);
\item \(S_i\) is red if and only if \(c_i=c_{i+1}+1\), equivalently,
\(F(i+1)=F(i)\);
\item \(I(S_i)\) is projective if and only if \(S_{i+1}\) is not a leaf, equivalently, \(i+1\in\operatorname{Im}F\).
\end{enumerate}
\end{lemma}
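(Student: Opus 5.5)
These three facts are standard, and the plan is to reduce each to an elementary statement about uniserial modules, using only the Kupisch description of \(P_i\) and the identity \(F(i+1)-F(i)=1+c_{i+1}-c_i\) from \cref{prop:F-basic}. Since every indecomposable projective or injective module over a Nakayama algebra is uniserial, injectivity and projectivity can be read off from composition series. I take \(A\) cyclic for definiteness and handle the linear case with the usual convention: only \(c_n\) is special there, with \(c_n=1\).

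For (2), I would compute \(\rad P_i\). This is the uniserial module with top \(S_{i+1}\) and length \(c_i-1\). It is a quotient of \(P_{i+1}\), so \(c_{i+1}\ge c_i-1\), and \(\rad P_i\) is projective exactly when \(\rad P_i\cong P_{i+1}\), that is, when \(c_{i+1}=c_i-1\). The degenerate case \(c_i=1\) makes \(S_i\) projective, hence not red, and is excluded separately. Since \(0\to\rad P_i\to P_i\to S_i\to0\) is the minimal projective presentation, we get \(\pdim S_i=1\) iff \(c_i=c_{i+1}+1\). This is equivalent to \(F(i+1)=F(i)\).

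For (1), I would use the fact that an indecomposable projective \(P_i\) is non-injective iff it embeds properly into a longer uniserial module with the same socle. That longer module would be a quotient of \(P_{i-1}\), so \(P_i\) is non-injective iff \(P_i\cong\rad P_{i-1}\), i.e. iff \(c_i=c_{i-1}-1\). By (2) applied to \(i-1\), this says exactly that \(S_{i-1}\) is red, i.e. \(F(i)=F(i-1)\). Since \(F\) is non-decreasing, the negation is \(F(i)>F(i-1)\).

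For (3), the key point is that \(I(S_i)\) is uniserial with socle \(S_i\), and it is projective iff it equals the longest projective with socle \(S_i\). Indecomposable projectives with socle \(S_i\) are the \(P_j\) with \(j+c_j-1\equiv i\), i.e. \(F(j)\equiv i+1 \pmod n\). The step I expect to be the main obstacle is the converse direction: showing that if some projective \(P_j\) has socle \(S_i\), then the maximal one is injective. For this I would take the minimal such \(j\) in \(\mathbb Z\) with \(F(j)=i+1+kn\), for a fixed lift. Then \(F(j-1)<F(j)\), so \(P_j\) is injective by (1), and hence \(I(S_i)=P_j\). Conversely, if \(I(S_i)\) is projective, it is some \(P_j\) with socle \(S_i\), so \(i+1\in\operatorname{Im}F\) modulo \(n\), which by periodicity (\cref{prop:F-basic}(2)) is the same as \(i+1\in\operatorname{Im}F\) for a suitable lift. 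Finally, "not a leaf" means \(\bar i+1\in\operatorname{Im}\overline F\), and the commutative diagram identifies this with the condition above.
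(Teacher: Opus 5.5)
Your argument is correct. The paper does not prove this lemma at all; it quotes \cite[Lemmas~2.1, 2.5]{ShenMAG} and \cite[Lemma~3.1]{Mad05}. Your proposal therefore replaces a citation with a self-contained derivation from the Kupisch series. The structure you chose is the natural one:
\begin{enumerate}[label=\textnormal{(\arabic*)}]
\item via projectivity of $\rad P_i$;
\item via the observation that a proper uniserial extension of $P_i$ with the same socle forces $\rad P_{i-1}\cong P_i$;
\item by taking the minimal element of the fibre $F^{-1}(i+1)$ and applying (1).
\end{enumerate}
What your route buys over the citation is transparency. It shows the lemma holds for every Nakayama algebra, with no hypothesis on global or dominant dimension. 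It also shows that (3) follows formally from (1), the monotonicity of $F$, and the periodicity $F(j+n)=F(j)+n$. Periodicity bounds $c_j$, so the fibre $F^{-1}(i+1)$ is finite and your minimum exists; this is worth stating.

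Two small points of precision:
\begin{itemize}
\item In (1), the longer uniserial module containing $P_i$ need not itself be a quotient of $P_{i-1}$. What is a quotient of $P_{i-1}$ is its unique submodule of length $c_i+1$. That submodule has radical $P_i$, and the only arrow ending at $i$ starts at $i-1$, so its top is $S_{i-1}$. The Kupisch inequality $c_i\ge c_{i-1}-1$ then forces $c_{i-1}=c_i+1$.
\item In (2), when you exclude the degenerate case $c_i=1$, also record that $c_i=c_{i+1}+1$ fails there because $c_{i+1}\ge1$. Only then do both sides of the equivalence fail together.
\end{itemize}

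With the paper's linear convention $c_{n-1}=1$, the boundary cases work exactly as you indicate. At the sink, $\rad P_{n-1}=0$. At the source, $P_0$ is injective because no arrow ends at $0$, and this matches $F(0)=c_0>0=F(-1)$.
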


Whether \(A\) has finite global dimension is determined by the
connectedness and weight of its resolution quiver
\cite[Proposition~1.1]{Shen17}.

\begin{lemma}
\label{lem:finite-gldim-weight}
The algebra \(A\) has finite global dimension if and only if its resolution
quiver is connected and has weight one.
\end{lemma}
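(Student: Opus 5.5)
The lemma is quoted from \cite[Proposition~1.1]{Shen17}, so my plan is to reconstruct its proof through the syzygy formula $\Omega^2 M[a,b)\cong M[F(a),F(b))$ and the integral Gustafson function $F$.

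\emph{Finite global dimension forces connectedness and weight one.} Suppose $\gldim A<\infty$. For each $i$ I will look at the uniserial module $M[i,i+1)=S_i$. Its first syzygy is $\rad P_i=M[i+1,F(i))$, and its even syzygies are
\[
\Omega^{2r}S_i\cong M[F^r(i),F^r(i+1)),
\]
as long as these are nonzero. Since $F$ is non-decreasing, the gaps $F^r(i+1)-F^r(i)$ are non-negative. Finite projective dimension of $S_i$ means that some syzygy vanishes, which is equivalent to $F^r(i)=F^r(i+1)$ for $r$ large. Taking this for all $i$ with $0\le i<n$, I get $F^r(0)=F^r(n)=F^r(0)+n$ for large $r$, by \cref{prop:F-basic}(2). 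That is a contradiction unless something more careful is going on, so I will phrase the argument via $\overline F$ instead.

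\emph{Reformulation via $\overline F$.} On a cycle of length $m$ through $\bar a$, the lift satisfies $F^m(a)=a+wn$, where $w$ is the weight of the cycle. This holds because summing $c_i$ along the cycle gives $\sum c=n\cdot\mathrm{wt}$. Consider two vertices on distinct components, or a single cycle of weight $w\ge2$. Then I can find consecutive integers $i,i+1$ whose $F$-orbits never merge. If the cycle has weight $w\ge2$, the points $F^{mr}(a)$ and $F^{mr}(a+n)$ stay $n$ apart forever. Moreover, the cycle's lifts, $w$ of them per period and interlaced, prevent all the gaps on $[a,a+n)$ from collapsing. So some $S_i$ with $a\le i<a+n$ keeps a positive gap forever, giving $\pdim S_i=\infty$. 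The same interlacing argument handles two cycles in different components. The key counting fact is this: the number of residues in $[x,x+n)$ that lie on lifts of cycles, after applying $F^{r}$, is exactly the total weight times the number of cycle vertices.

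\emph{Connected and weight one implies finite global dimension.} Conversely, suppose $R(A)$ is connected with a single cycle of weight one. Then the lifts of the cycle form a single $F$-orbit structure, with exactly one lifted cycle point per period. For any $i$, iterating $F$ sends both $i$ and $i+1$ into the lifted cycle set, which is $F$-invariant and has exactly one point in each window of length $n$. Because $F$ is monotone and $F^r(i+1)-F^r(i)$ is non-increasing and at most $n$ eventually, the two iterates land on the same lifted cycle point. Hence $\pdim S_i<\infty$ for all $i$, and so $\gldim A<\infty$.

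The \textbf{main obstacle} is the precise bookkeeping in the counting step, namely showing that distinct components or weight $\ge2$ produce a gap that never closes. I would handle it via the rotation-number viewpoint: the monotone degree-one lift $F$ has eventual images forming a discrete set with density (number of cycle vertices)/$n$ per unit, and a gap closes only when two points merge.
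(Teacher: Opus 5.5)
\textbf{There is a genuine gap: your criterion for when the syzygy recursion terminates is wrong.} By monotonicity one always has $F^r(i)\le F^r(i+1)\le F^{r+1}(i)$. The formula $\Omega^{2r}S_i\cong M[F^r(i),F^r(i+1))$ holds only while both inequalities are strict, and the resolution stops at \emph{either} end:
\begin{itemize}
\item if $F^r(i+1)=F^r(i)$, an odd syzygy was projective;
\item if $F^r(i+1)=F^{r+1}(i)$, then $\Omega^{2r}S_i\cong P_{F^r(i)}$ is projective by \cref{lem:interval-syzygies}, and from then on the gap $F^s(i+1)-F^s(i)=c_{F^s(i)}$ stays positive forever. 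A projective simple module ($c_i=1$) is the case $r=0$.
\end{itemize}
So ``finite projective dimension iff the gap eventually vanishes'' is false. That is exactly where your contradiction $F^r(0)=F^r(0)+n$ comes from, which you noticed but set aside.

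The same error breaks both later steps. Take the Kupisch series $(3,4,4)$ of \cref{ex:kupisch-344} and $i=0$; the gaps are $1,2,3,3,\dots$.
\begin{itemize}
\item They increase, so the gap is not non-increasing.
\item They never close, yet $\pdim S_0=4$. So a permanently positive gap does not give $\pdim S_i=\infty$.
\item $F^r(0)$ and $F^r(1)$ never reach the same cyclic lift, since $F^r(1)=F^{r+1}(0)$ for $r\ge 2$.
\end{itemize}
Two further claims are also wrong. Every window of length $n$ contains $q$ cyclic lifts, not one; there are three in \cref{ex:gabriel-a5-radical-square-zero}. And your ``key counting fact'' is not a meaningful statement as written.

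\textbf{The missing idea is the correct criterion.} For $a<b\le F(a)$, $\pdim M[a,b)<\infty$ if and only if $a$ and $b$ lie in the same connected component of the functional digraph $\Gamma_F$ of $F$ on $\mathbb Z$.
\begin{itemize}
\item If the resolution terminates at either end, $a$ and $b$ are in one component.
\item Conversely, suppose $F^s(a)=F^t(b)$. For large $R$ you get $F^R(a)\le F^R(b)=F^{R-t+s}(a)\le F^{R+1}(a)$. Since $j\mapsto F^j(a)$ is strictly increasing, this forces $s-t\in\{0,1\}$. Hence $F^R(b)\in\{F^R(a),F^{R+1}(a)\}$, and the resolution terminates.
\end{itemize}
Applied to $S_i=M[i,i+1)$, this shows that $\gldim A<\infty$ if and only if $\Gamma_F$ is connected.

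What you call the main obstacle is then exactly \cref{cor:connected-components-lift}, whose proof does not use finite global dimension. It shows that $\Gamma_F$ has $\sum_j\operatorname{wt}(C_j)$ components. Hence $\Gamma_F$ is connected if and only if $R(A)$ has a single cycle of weight one.

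For the converse you can also argue directly. For $r\ge\dist(A)$, both $F^r(i)$ and $F^r(i+1)$ are cyclic lifts. Weight one gives $F(x_s)=x_{s+1}$, as in the proof of \cref{lem:successive-lifts}. So $F^r(i)=x_s$ forces $F^r(i+1)\in\{x_s,x_{s+1}\}$: the same \emph{or the next} cyclic lift, not necessarily the same one.

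The paper itself does not prove this lemma; it cites it from Shen. The route above reverses the order of the paper's corollary that $\gldim A<\infty$ if and only if $\Gamma_F$ is connected. It proves that corollary directly and then deduces the lemma from the component count.
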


The quotient map $\pi\colon\mathbb Z\to\mathbb Z/n\mathbb Z$
induces a surjective morphism of directed graphs
from the functional digraph of $F$ to $R(A)$. 

\begin{lemma}
	\label{cor:connected-components-lift}
	Let $\Gamma_F$ be the functional digraph of $F$,
	and let $C_1,\ldots,C_r$ be the cycles
	of $R(A)$. Then the number of connected components
	of $\Gamma_F$ is
	\[
	\sum_{j=1}^{r}\operatorname{wt}(C_j)
	=r\,\operatorname{wt}(R(A)).
	\]
\end{lemma}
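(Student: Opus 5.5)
Since every vertex of \(\Gamma_F\) has out-degree one, \(\Gamma_F\) is a functional digraph, and the plan is to count its connected components by counting its terminal structures. Because \(F\) is non-decreasing and satisfies \(F(i+n)=F(i)+n\), iterating \(F\) from any integer never returns to a smaller value. So \(\Gamma_F\) has no cycles except fixed points, and in fact \(F(i)=i+c_i>i\) rules those out too. Instead of cycles, each component of \(\Gamma_F\) is determined by a tail behaviour. Two integers \(a,b\) lie in the same component exactly when \(F^s(a)=F^t(b)\) for some \(s,t\ge0\).

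The key step is to relate components of \(\Gamma_F\) to lifts of the cycles of \(R(A)\).

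\emph{Every component meets a lift of a cycle.} Every vertex of \(R(A)\) eventually reaches a cycle \(C_j\). Hence every integer \(a\) has an iterate \(F^{d(a)}(a)\) lying in \(\pi^{-1}(C_j)\) for some \(j\).

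\emph{Structure of \(\pi^{-1}(C_j)\).} Fix a cyclic vertex \(\bar x\) of \(C_j\), of length \(m\), and an integer lift \(x\). Then \(F^m(x)=x+\sum_{S\in C_j}\length P(S)=x+n\operatorname{wt}(C_j)\). On \(\pi^{-1}(C_j)\), \(F\) acts injectively, since \(\overline F\) is a bijection on \(C_j\) and \(F\) commutes with translation by \(n\). So \(\pi^{-1}(C_j)\) splits into disjoint forward orbits, i.e. infinite chains. Setting \(w_j=\operatorname{wt}(C_j)\), the lifts \(x,x+n,\dots,x+(w_j-1)n\) represent the distinct chains through lifts of \(\bar x\). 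Two of them lie on the same chain iff they differ by a multiple of \(nw_j\). Consequently \(\pi^{-1}(C_j)\) is a disjoint union of exactly \(w_j\) chains.

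\emph{Distinct chains give distinct components.} Suppose two such chains were in the same component of \(\Gamma_F\). Then some iterates would coincide, \(F^s(y)=F^t(y')\). Both iterates are cyclic lifts, and by injectivity of \(F\) on \(\pi^{-1}(C_j)\), running iterates back along the chain shows that the chains are equal. Chains over different cycles \(C_j\ne C_k\) cannot meet, because their images in \(R(A)\) lie in different components.

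\emph{Every component contains exactly one chain.} If \(a\) reaches two chains, they must meet, since the forward orbit of \(a\) is a single sequence. This gives a bijection between the components of \(\Gamma_F\) and the set of chains, whose cardinality is \(\sum_j w_j\).

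Finally, \(\sum_j w_j=r\,\operatorname{wt}(R(A))\) holds because the weight is independent of the cycle, by \cite[Proposition~1.1]{ShenRQ}. The main point requiring care is the chain count in the second step. There I would verify that the orbit of \(x\) hits exactly the lifts \(x+kn w_j\) over \(\bar x\), using the displacement computation \(F^m(x)-x=n w_j\). This shows that the residues of lifts of \(\bar x\) modulo \(nw_j\) index the chains.
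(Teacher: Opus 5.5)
Your proposal is correct and follows essentially the same route as the paper: the displacement \(F^q(a_0)=a_0+n\operatorname{wt}(C)\) splits the lifts of each cycle into exactly \(\operatorname{wt}(C)\) disjoint bi-infinite chains, and every other integer is attached to exactly one of them. Your criterion that two integers lie in the same component exactly when their forward orbits meet makes explicit the paper's brief remark that lifts of non-cyclic vertices do not change the number of components.
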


\begin{proof}
	Let $C$ be a cycle of $R(A)$ with $q$ vertices
	and weight $w$. Choose an integer $a_0$ such that
	$S_{\bar a_0}\in C$, and put $a_j=F^j(a_0)$
	for $0\le j\le q$.
	Since the residue classes of $a_0,\ldots,a_{q-1}$
	are precisely the vertices of $C$, we have
	\[
	a_q-a_0
	=\sum_{j=0}^{q-1}c_{a_j}
	=\sum_{S\in C}\length P(S)=nw.
	\]
	
	Let $\widetilde C$ be the subgraph of $\Gamma_F$
	formed by all integer lifts of the vertices of $C$.
	Every vertex of $\widetilde C$ can be written
	uniquely as $a_j+kn$, where
	$0\le j<q$ and $k\in\mathbb Z$.
	Since $F(i+n)=F(i)+n$ and $a_q=a_0+nw$,
	the arrows of $\widetilde C$ are
	\[
	\begin{aligned}
		a_j+kn&\longrightarrow a_{j+1}+kn
		&& (0\le j<q-1),\\
		a_{q-1}+kn&\longrightarrow a_0+(k+w)n.
	\end{aligned}
	\]
	
	For each $0\le r<w$, these arrows give a
	bi-infinite directed path
	\[
	\begin{aligned}
		\cdots
		&\longrightarrow a_0+(r-w)n
		\longrightarrow\cdots
		\longrightarrow a_{q-1}+(r-w)n\\
		&\longrightarrow a_0+rn
		\longrightarrow\cdots
		\longrightarrow a_{q-1}+rn\\
		&\longrightarrow a_0+(r+w)n
		\longrightarrow\cdots.
	\end{aligned}
	\]
These $w$ paths are pairwise disjoint
and exhaust $\widetilde C$.
Since no arrow joins two distinct paths,
they are precisely the connected components
of $\widetilde C$.
	
Since every connected component of $R(A)$
contains a unique oriented cycle, and every
vertex eventually reaches that cycle,
the lifts of non-cyclic vertices do not change
the number of connected components.
Moreover, the lifts of distinct connected
components of $R(A)$ are disjoint, with
no arrows between them.
Hence $\Gamma_F$ has exactly
\[
\sum_{j=1}^r\operatorname{wt}(C_j)
=r\,\operatorname{wt}(R(A))
\]
connected components.
\end{proof}

\begin{corollary}
	$A$ has finite global dimension if and only if
	$\Gamma_F$ is connected.
\end{corollary}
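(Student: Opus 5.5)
The corollary is a direct combination of \cref{lem:finite-gldim-weight} and \cref{cor:connected-components-lift}, so the plan is to translate the statement about \(\Gamma_F\) into the statement about \(R(A)\) and compare.

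For the forward direction, assume \(A\) has finite global dimension. By \cref{lem:finite-gldim-weight}, \(R(A)\) is connected, so it has exactly one oriented cycle (\(r=1\)), and \(\operatorname{wt}(R(A))=1\). Then \cref{cor:connected-components-lift} says \(\Gamma_F\) has \(r\,\operatorname{wt}(R(A))=1\) component, i.e.\ \(\Gamma_F\) is connected.

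For the converse, assume \(\Gamma_F\) is connected. Then \(r\,\operatorname{wt}(R(A))=1\), where \(r\ge1\) is the number of cycles of \(R(A)\) and the weight is an integer. The weight is positive: on a cycle \(C\) we have \(n\operatorname{wt}(C)=a_q-a_0=\sum_{S\in C}\length P(S)\), a sum of lengths of nonzero modules, so it is at least \(1\). Positive integers with product \(1\) force \(r=1\) and \(\operatorname{wt}(R(A))=1\). Since every component of a finite functional digraph contains exactly one oriented cycle, \(r=1\) means \(R(A)\) is connected. Hence \(A\) has finite global dimension by \cref{lem:finite-gldim-weight}.

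The only step needing care is the converse. There one must remember that the weight is a positive integer, by \cite[Proposition~1.1]{ShenRQ}, and that the number of cycles equals the number of components of \(R(A)\). Without this the equation \(r\,\operatorname{wt}=1\) would not split into \(r=1\) and \(\operatorname{wt}=1\).
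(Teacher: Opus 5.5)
Your proposal is correct and follows the paper's proof, which simply cites \cref{lem:finite-gldim-weight} and \cref{cor:connected-components-lift}. You also spell out what the paper leaves implicit in the converse: \(r\ge1\) and \(\operatorname{wt}(R(A))\) is a positive integer, so \(r\,\operatorname{wt}(R(A))=1\) forces both to equal \(1\), and \(r\) equals the number of components of \(R(A)\).
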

\begin{proof}
	By \cref{lem:finite-gldim-weight,cor:connected-components-lift}.
\end{proof}

In the remainder of the paper, we assume that $A$
has finite global dimension. By \cref{lem:finite-gldim-weight},
$R(A)$ is connected and has weight one. Let \(C\) be the unique cycle of \(R(A)\), with \(q\) vertices, and put
\[
\mathcal C=\{a\in\mathbb Z\mid S_{\bar a}\text{ is a vertex of }C\}.
\]

\begin{lemma}
\label{lem:successive-lifts}
Let \(A\) be a Nakayama algebra with \(n\) simple modules,
and let \(\{x_r\}_{r\in\mathbb Z}\) be a strictly increasing enumeration of \(\mathcal C\). Then, for every \(r\in\mathbb Z\),
\[
F(x_r)=x_{r+1},\qquad
x_{r+q}=x_r+n,\qquad
F^q(x_r)=x_r+n.
\]
\end{lemma}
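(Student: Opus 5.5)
We use three facts established above: $F$ is non-decreasing and satisfies $F(i+n)=F(i)+n$ (\cref{prop:F-basic}), $\mathcal C$ is invariant under $F$, and, by the proof of \cref{cor:connected-components-lift} with weight $w=1$, $F^q(a)=a+n$ for every $a\in\mathcal C$.

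First, $\mathcal C$ is stable under $F$ and under translation by $\pm n$. It is a union of residue classes, hence closed under $\pm n$. If $a\in\mathcal C$, then $\overline F(\bar a)$ lies on the cycle $C$, so $F(a)\in\mathcal C$. Since $C$ has exactly $q$ vertices, every interval $[x,x+n)$ contains exactly $q$ elements of $\mathcal C$. Hence $x_{r+q}=x_r+n$ for all $r$: the element $x_r+n$ lies in $\mathcal C$, and exactly $q-1$ elements of $\mathcal C$ lie strictly between $x_r$ and $x_r+n$. The identity $F^q(x_r)=x_r+n$ is the computation $a_q-a_0=nw$ from the proof of \cref{cor:connected-components-lift}, with $w=\operatorname{wt}(R(A))=1$ by \cref{lem:finite-gldim-weight}.

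The main step is $F(x_r)=x_{r+1}$. Consider the map $\phi\colon\mathcal C\to\mathcal C$ given by restricting $F$. It is non-decreasing, and it is injective on $\mathcal C$: if $F(a)=F(b)$ with $a,b\in\mathcal C$, then $a+n=F^q(a)=F^q(b)=b+n$. An injective non-decreasing map is strictly increasing, so in the enumeration $\phi(x_r)=x_{r+s(r)}$, the function $r\mapsto r+s(r)$ is strictly increasing. Thus $s(r+1)\ge s(r)$. Also $s(r+q)=s(r)$ by the translation property, so $s$ is periodic and non-decreasing, hence constant, say $s$. Iterating gives $F^q(x_r)=x_{r+qs}=x_r+sn$, and comparing with $F^q(x_r)=x_r+n$ yields $s=1$.

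We expect the main obstacle to be justifying injectivity and the constancy of the shift cleanly. Both rest on $F^q(a)=a+n$, that is, on weight one, which is where the standing assumption of finite global dimension enters. Once the shift is shown to be constant, the remaining identities follow directly.
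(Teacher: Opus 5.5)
Your proof is correct. For \(x_{r+q}=x_r+n\) and \(F^q(x_r)=x_r+n\) it matches the paper: count residues in a window of length \(n\), and use the weight-one computation \(\sum_{t<q}c_{F^t(x_r)}=n\).

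For \(F(x_r)=x_{r+1}\) you take a genuinely different route. The paper works directly with the orbit \(y_t=F^t(x_r)\). These values strictly increase because \(c_{y_t}\ge1\), and they are \(q\) distinct cyclic lifts lying in \([x_r,x_r+n)\), since \(y_q=x_r+n\). Hence they exhaust \(\mathcal C\cap[x_r,x_r+n)\), and comparing orders gives \(y_t=x_{r+t}\).

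You argue structurally instead. The identity \(F^q(a)=a+n\) on \(\mathcal C\) makes \(F|_{\mathcal C}\) injective, hence strictly increasing, so it acts on the enumeration as \(x_r\mapsto x_{r+s(r)}\). The shift \(s\) is non-decreasing and \(q\)-periodic, hence constant, and \(F^q(x_r)=x_{r+qs}=x_r+sn\) forces \(s=1\).

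What each approach buys: the paper's argument is shorter and stays inside one fundamental window. Yours never uses that a single orbit segment fills that window. It therefore carries over verbatim to a cycle of arbitrary weight \(w\), where \(F^q(a)=a+wn\) still gives injectivity and the same comparison yields \(F(x_r)=x_{r+w}\). There the paper's exhaustion step breaks down: \([x_r,x_r+wn)\) contains \(wq\) cyclic lifts, but the orbit segment supplies only \(q\) of them.
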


\begin{proof}
Fix \(r\in\mathbb Z\). The \(n\) integers in \([x_r,x_r+n)\)
represent the residue classes modulo \(n\) exactly once.
Hence \(\mathcal C\cap[x_r,x_r+n)\) has \(q\) elements.
Since \(x_r+n\in\mathcal C\), the increasing enumeration gives
\(x_{r+q}=x_r+n\).

Set \(y_t=F^t(x_r)\) for \(0\le t\le q\).
Since \(y_{t+1}-y_t=c_{y_t}\ge1\) for \(0\le t<q\), we have
\(y_0<y_1<\cdots<y_q\).
On the residue classes corresponding to \(C\), \(\overline F\)
is a cyclic permutation of length \(q\).
Since \(\overline{y_t}=\overline F^{\,t}(\overline{x_r})\),
\(\overline{y_0},\ldots,\overline{y_{q-1}}\) are distinct and represent
all vertices of \(C\).
Since $\operatorname{wt}(C)=1$, we have
\[
y_q-y_0
=\sum_{t=0}^{q-1}c_{y_t}
=\sum_{S\in C}\length P(S)
=n.
\]
Thus \(y_q=x_r+n=x_{r+q}\). The \(q\) distinct cyclic lifts
\(y_0,\ldots,y_{q-1}\) lie in \([x_r,x_r+n)\), so they exhaust
\(\mathcal C\cap[x_r,x_r+n)\). Comparing their increasing order gives
\(y_t=x_{r+t}\) for \(0\le t<q\), and the equality also holds for
\(t=q\).
\end{proof}

\begin{example}
	\label{ex:gabriel-a5-radical-square-zero}
Let $A$ be a Nakayama algebra with five simple modules
and Kupisch series $(c_0,c_1,c_2,c_3,c_4)=(2,2,2,2,1)$. Its Gabriel quiver is
	\[
	Q:\qquad
	\begin{tikzpicture}[
		baseline=(v0.base),x=1.1cm,y=1cm,
		every node/.style={text=black,inner sep=2pt},
		quiverarrow/.style={draw=black,-{Latex[length=3.7pt,width=3.2pt]},line width=0.45pt},
		relation/.style={draw=blue,line width=0.4pt,dash pattern=on 2pt off 1.5pt,line cap=round}
		]
		\node (v0) at (0,0) {$0$};
		\node (v1) at (1,0) {$1$};
		\node (v2) at (2,0) {$2$};
		\node (v3) at (3,0) {$3$};
		\node (v4) at (4,0) {$4$};
		\draw[quiverarrow] (v0.east) -- node[below,inner sep=1pt] {$\scriptstyle\alpha_0$} (v1.west);
		\draw[quiverarrow] (v1.east) -- node[below,inner sep=1pt] {$\scriptstyle\alpha_1$} (v2.west);
		\draw[quiverarrow] (v2.east) -- node[below,inner sep=1pt] {$\scriptstyle\alpha_2$} (v3.west);
		\draw[quiverarrow] (v3.east) -- node[below,inner sep=1pt] {$\scriptstyle\alpha_3$} (v4.west);
		\draw[relation] (0.54,0.02) .. controls (0.65,0.48) and (1.35,0.48) .. (1.46,0.02);
		\draw[relation] (1.54,0.02) .. controls (1.65,0.48) and (2.35,0.48) .. (2.46,0.02);
		\draw[relation] (2.54,0.02) .. controls (2.65,0.48) and (3.35,0.48) .. (3.46,0.02);
	\end{tikzpicture}.
	\]
	The dashed arcs mark the zero relations. 
	\begin{figure}[htbp]
		\begin{tikzpicture}[
			scale=0.74,
			transform shape,
			x=1cm,
			y=1cm,
			vertex/.style={inner sep=1.2pt,font=\Large},
			lab/.style={inner sep=0pt,font=\normalsize},
			arr/.style={
				-{Stealth[length=4.6pt,width=4.2pt]},
				line width=0.95pt,
				shorten <=4pt,
				shorten >=4pt
			},
			rightarr/.style={
				-{Stealth[length=5.0pt,width=4.4pt]},
				line width=0.95pt,
				shorten <=2.5pt,
				shorten >=2.5pt
			},
			cyclearr/.style={
				-{Stealth[length=4.8pt,width=4.3pt]},
				line width=0.95pt
			}
			]
			\node[vertex] (dotsL) at (-6.0,0) {$\cdots$};
			\node[vertex] (m5)    at (-4.5,0) {$-5$};
			\node[vertex] (m3)    at (-3.0,0) {$-3$};
			\node[vertex] (m1)    at (-1.5,0) {$-1$};
			\node[vertex] (z0)    at ( 0.0,0) {$0$};
			\node[vertex] (p2)    at ( 1.5,0) {$2$};
			\node[vertex] (p4)    at ( 3.0,0) {$4$};
			\node[vertex] (p5)    at ( 4.5,0) {$5$};
			\node[vertex] (dotsR) at ( 6.0,0) {$\cdots$};
			
			\draw[rightarr] (dotsL.east) -- (m5.west);
			\draw[rightarr] (m5.east)    -- (m3.west);
			\draw[rightarr] (m3.east)    -- (m1.west);
			\draw[rightarr] (m1.east)    -- (z0.west);
			\draw[rightarr] (z0.east)    -- (p2.west);
			\draw[rightarr] (p2.east)    -- (p4.west);
			\draw[rightarr] (p4.east)    -- (p5.west);
			\draw[rightarr] (p5.east)    -- (dotsR.west);
			
			\node[vertex] (mm9) at (-4.5,2.70) {$-9$};
			\node[vertex] (mm7) at (-4.5,1.35) {$-7$};
			\node[vertex] (mm4) at ( 0.0,2.70) {$-4$};
			\node[vertex] (mm2) at ( 0.0,1.35) {$-2$};
			\node[vertex] (pp1) at ( 4.5,2.70) {$1$};
			\node[vertex] (pp3) at ( 4.5,1.35) {$3$};
			
			\draw[arr] (mm9) -- (mm7);
			\draw[arr] (mm7) -- (m5);
			\draw[arr] (mm4) -- (mm2);
			\draw[arr] (mm2) -- (z0);
			\draw[arr] (pp1) -- (pp3);
			\draw[arr] (pp3) -- (p5);
			
			\node[lab,anchor=north] at ([yshift=-8pt]m5.south) {$x_{-3}$};
			\node[lab,anchor=north] at ([yshift=-8pt]m3.south) {$x_{-2}$};
			\node[lab,anchor=north] at ([yshift=-8pt]m1.south) {$x_{-1}$};
			\node[lab,anchor=north] at ([yshift=-8pt]z0.south) {$x_0$};
			\node[lab,anchor=north] at ([yshift=-8pt]p2.south) {$x_1$};
			\node[lab,anchor=north] at ([yshift=-8pt]p4.south) {$x_2$};
			\node[lab,anchor=north] at ([yshift=-8pt]p5.south) {$x_3$};
			
			\begin{scope}[shift={(0,-4.45)}]
				\node[vertex] (L2) at ( 0.0, 3.00) {$1$};
				\node[vertex] (L4) at ( 0.0, 1.90) {$3$};
				\node[vertex] (L1) at ( 0.0, 0.80) {$0$};
				\node[vertex] (L3) at ( 1.0,-0.20) {$2$};
				\node[vertex] (L5) at (-1.0,-0.20) {$4$};
				
				\draw[arr] (L2) -- (L4);
				\draw[arr] (L4) -- (L1);
				
				\draw[cyclearr]
				({cos(78)},{-0.20+sin(78)})
				arc[start angle=78,end angle=18,radius=1];
				\draw[cyclearr]
				({cos(-18)},{-0.20+sin(-18)})
				arc[start angle=-18,end angle=-162,radius=1];
				\draw[cyclearr]
				({cos(162)},{-0.20+sin(162)})
				arc[start angle=162,end angle=102,radius=1];
			\end{scope}
		\end{tikzpicture}
		\caption{The functional digraphs of $F$ and $\overline{F}$.}
		\label{fig:lift-resolution-quiver}
	\end{figure}
	
The resolution quiver \(R(A)\) has a unique oriented cycle
\(C\) with vertices \(0,2,4\),
	and the remaining vertices form the path
	\(1\to 3\to 0\).
	The weight of \(C\) is
	\[
	\operatorname{wt}(C)
	=\frac{c_0+c_2+c_4}{5}
	=\frac{2+2+1}{5}=1.
	\]
	
	The set of cyclic lifts is
	\[
	\mathcal C
	=\{5k,\,5k+2,\,5k+4\mid k\in\mathbb Z\}.
	\]
	Taking \(x_0=0\), their increasing enumeration is given by
	\[
	x_{3k}=5k,\qquad
	x_{3k+1}=5k+2,\qquad
	x_{3k+2}=5k+4
	\quad(k\in\mathbb Z).
	\]
	Since
	\[
	F(5k)=5k+2,\qquad
	F(5k+2)=5k+4,\qquad
	F(5k+4)=5k+5,
	\]
	we obtain
	\[
	F(x_r)=x_{r+1},\qquad
	F^3(x_r)=x_{r+3}=x_r+5
	\quad(r\in\mathbb Z).
	\]
	Notice that \(x_1=2\), whereas \(x_0+1=1\) is not a
	cyclic lift.
	
	Finally, \(S_1\) is the only leaf of \(R(A)\), and \(S_3\)
	is its only red vertex. The black vertices are
	\(S_0,S_1,S_2,S_4\). The distances to the cycle are
	\[
	(d(0),d(1),d(2),d(3),d(4))
	=(0,2,0,1,0),
	\]
	so \(\dist(A)=2\).
\end{example}

\section{Homological dimensions along the Kupisch series}
\label{sec:distance-monotonicity}
Throughout this section, let \(A\) be a higher Auslander
Nakayama algebra with
\(\gldim A=2m\), where \(m\ge1\).

A finite-dimensional algebra $A$ is called
\emph{minimal Auslander--Gorenstein} if its
self-injective dimension is finite and does not
exceed its dominant dimension
\cite[Definition~1.1]{IS}.
The following characterization of minimal
Auslander--Gorenstein Nakayama algebras of even
self-injective dimension is due to \cite[Theorem~C]{ShenMAG}.

\begin{theorem}
\label{thm:Shen-characterization}
A Nakayama algebra \(A\) is minimal Auslander--Gorenstein of
self-injective dimension \(2m\), with \(m\ge1\), if and only if the resolution quiver
satisfies the following conditions:
\begin{enumerate}[label=\textnormal{(\arabic*)}]
\item \(\dist(A)=m>0\), and every leaf \(S\) satisfies \(\dist(S)=m\);
\item every non-cyclic vertex has at most one predecessor;
\item every cyclic vertex has at most two predecessors;
\item every cyclic vertex is black.
\end{enumerate}
\end{theorem}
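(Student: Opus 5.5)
The plan is to translate both homological invariants in \cref{thm:Shen-characterization}, the self-injective dimension and the dominant dimension, into statements about the functional digraph of \(F\), and then compare the two translations. Throughout I work in Kupisch coordinates, writing \(M[a,b)\) for the uniserial module with top \(S_a\) and length \(b-a\), so that \(P_i=M[i,F(i))\). The first step is to use \cref{lem:Shen-basic} to fix where the non-injective projectives sit. \(P_i\) is non-injective exactly when \(F(i)=F(i-1)\), that is, when \(S_{i-1}\) is red. In that case \(P_i=\rad P_{i-1}\), and the injective envelope of \(P_i\) is \(P_j\), where \(j\) is the least integer in the fibre \(F^{-1}(F(i))\).

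The second step is to iterate. I will use the dual of the syzygy formula \(\Omega^2M[a,b)\cong M[F(a),F(b))\): two consecutive cosyzygies move the endpoints of a uniserial module backwards along \(F\), choosing the extreme elements of the fibres. The injective terms that appear are projective exactly when the relevant socle index lies in \(\operatorname{Im}F\), by \cref{lem:Shen-basic}(3).
\begin{itemize}
\item Starting from a non-injective \(P_i\), I expect \(\domdim P_i\) to equal twice the number of backward steps that can be taken before a leaf is reached, corrected by one according to whether the terminal injective is non-projective in an even or an odd position.
\item Running the same bookkeeping for \(\idim P_i\), I expect the coresolution of \(P_i\) to stop once the backward chain leaves the non-branching part of the tree. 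This should bound \(\idim\) by \(2\dist\) plus a parity correction.
\end{itemize}
Here the red and black colouring, via \cref{lem:Shen-basic}(2), decides whether an intermediate cosyzygy is simple or longer.

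With these formulas in hand, the two directions are as follows.

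For the direction in which the quiver conditions imply \(\idim A=2m\le\domdim A\), condition (1) makes every backward chain from a relevant vertex have length exactly \(m\). Conditions (2) and (3) guarantee that each fibre has at most two elements, and that two elements occur only at the cycle, where a cycle vertex has one cyclic and one non-cyclic preimage. This makes the cosyzygies alternate cleanly between the form \(M[j,i)\) and a projective-injective term. Condition (4) forces the final term, reached after \(2m\) steps, to be injective rather than merely projective. Together these give \(\idim=\domdim=2m\).

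For the converse, I argue by contradiction, condition by condition. A leaf at distance less than \(m\) produces a non-projective injective too early, so \(\domdim<2m\). A branching non-cyclic vertex, or a cyclic vertex with three preimages, produces a cosyzygy that is not uniserial of the expected shape, and this again lowers \(\domdim\) below \(\idim\). A red cyclic vertex makes the resolution continue, contradicting finiteness of the self-injective dimension or its evenness. Finally, \(\dist(A)=m\) follows by matching the largest chain with \(\idim=2m\).

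I expect the main obstacle to be the precise parity bookkeeping in the second step. The computation must show that even-indexed and odd-indexed cosyzygies correspond respectively to vertices of the resolution quiver and to intervals between consecutive elements of a fibre, and that \emph{evenness} of the dimension is what forces the endpoint to land on a black cyclic vertex. I would first verify the formulas on small examples, such as \cref{ex:gabriel-a5-radical-square-zero}, before writing the general induction on the distance \(d(a)\).
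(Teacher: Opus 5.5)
The paper gives no proof of \cref{thm:Shen-characterization}; it imports it from \cite[Theorem~C]{ShenMAG}, so your sketch has to stand on its own.

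\textbf{The ``if'' direction.} Your heuristic is the right one once it is made precise. Put $G(x)=\min\{k\mid F(k)\ge x\}$. For a non-injective $P_i$, so that $F(i-1)=F(i)$, the minimal injective coresolution has
\[
\Omega^{-2t}P_i\cong M[G^t(i),G^t(F(i))),\qquad
\Omega^{-2t-1}P_i\cong M[G^{t+1}(F(i)),G^t(i))
\]
for as long as these are nonzero. Moreover, $I^{2t}$, resp.\ $I^{2t+1}$, is projective if and only if $G^t(F(i))$, resp.\ $G^t(i)$, lies in $\operatorname{Im}F$. Note that $G(x)$ is the least element of the fibre over $x$ only when $x\in\operatorname{Im}F$. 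Past a leaf it jumps to the least element of the next nonempty fibre, and this is what governs $\idim$.

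With this dictionary the ``if'' direction goes through:
\begin{enumerate}[label=\textnormal{(\roman*)}]
\item Conditions (2)--(4) force $S_i$ to be cyclic and $S_{i-1}$ to be non-cyclic with $d(i-1)=1$.
\item Condition (1) makes $w_t=G^t(F(i))$ reach a leaf exactly at $t=m$, and $u_t=G^t(i)$ no earlier. The chain $u_t$ may first run along the cycle, so ``length exactly $m$'' is wrong for it, though harmless.
\item The strict interleaving $w_{t+1}<u_t<w_t$ follows inductively from monotonicity of $F$.
\end{enumerate}
What you do not address is why the coresolution \emph{stops} at $2m$. You must show that $\Omega^{-2m}P_i\cong M[u_m,w_m)$ is already injective, i.e.\ $G(w_m)=u_m$. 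Equivalently, no element of $\operatorname{Im}F$ lies strictly between $w_m$ and $u_{m-1}$. This is true: push such an element forward by $F$ and use that each $u_t$ is the least element of its fibre. But it is a real step. Your sentence that (4) makes the final term ``injective rather than merely projective'' misdescribes it. The term $I^{2m}$ is non-projective, and (4) is actually used to decide which element of a two-element fibre over a cyclic vertex is the cyclic one.

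\textbf{The ``only if'' direction.} This is the genuine gap; it carries the content of Shen's theorem, and none of your three mechanisms works as stated.

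First, branching cannot produce a cosyzygy ``that is not uniserial''. Over a Nakayama algebra, the cokernel of $M\to I(M)$ for indecomposable $M$ is a quotient of the uniserial module $I(\soc M)$, so every cosyzygy is uniserial or zero. Branching can only be detected numerically: through which endpoints leave $\operatorname{Im}F$ and where the two chains merge.

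Second, a red cyclic vertex does not make the coresolution infinite. For the Kupisch series $(3,3,3,4)$ quoted in Section~5, $F(3)=F(4)=7\equiv3$, so the only cyclic vertex $S_3$ is a red loop, yet $\gldim A=\domdim A=5$. Hence (4) must come from the parity of $2m$ together with $\idim=\domdim$, and you give no argument for how parity enters.

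Third, the coresolutions of projectives only visit $i$, $F(i)$ and values of $G$, which are least elements of fibres. A leaf at the wrong distance, or a second predecessor of a non-cyclic vertex, is invisible unless you show it lies on such a chain for some red $S_{i-1}$. Your phrases ``too early'' and ``matching the largest chain'' presuppose exactly this.

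What is missing is a combinatorial lemma that converts the conditions $w_t,u_t\in\operatorname{Im}F$ for $t<m$, $w_m\notin\operatorname{Im}F$ and $G(w_m)=u_m$, required for every red $S_{i-1}$, into the four forward conditions on $R(A)$. One natural way to bring $\dist$ into play is to use that the notion of minimal Auslander--Gorenstein algebra is left--right symmetric (Iyama--Solberg). This lets you also use the projective resolutions of the non-projective injectives $M[G(\ell),\ell)$, $\ell$ a leaf, which by \cref{lem:interval-syzygies} move along forward $F$-orbits. As written, however, the converse is not proved.
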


For a higher Auslander Nakayama algebra \(A\), we have
\[
\idim A_A=\gldim A=\domdim A=2m.
\]
Thus \(A\) is a minimal Auslander--Gorenstein algebra
of self-injective dimension \(2m\), and hence satisfies
the conditions of \cref{thm:Shen-characterization}.

By \cref{lem:finite-gldim-weight}, \(R(A)\) is connected and has
weight one. Let \(q\) be the number of vertices on its unique cycle,
and choose \(\{x_r\}\) as in \cref{lem:successive-lifts}. Then
\[F(x_r)=x_{r+1},\qquad x_{r+q}=x_r+n\]
Thus
\[	1\le x_{r+1}-x_r\le n.\]
In particular, for any integers \(x_r<a<b\le x_{r+1}\),
we have \(0<b-a<n\), so \(a\not\equiv b\pmod n\).

For integers \(a<b\le F(a)\), define
\begin{equation*}
M[a,b):=P_a/\rad^{\,b-a}P_a.
\end{equation*}
This is the uniserial module with top \(S_a\) and length \(b-a\).
Set \(M[a,a)=0\). Simultaneous translation of both endpoints by
\(n\) preserves the isomorphism class. In particular,
\(P_a=M[a,F(a))\) and \(P_{x_r}=M[x_r,x_{r+1})\).

The following lemma gives the standard syzygy formulas in interval
notation; see \cite[Section~2.1 and Proposition~2.2]{Mar18}.
It holds for arbitrary Nakayama algebras.

\begin{lemma}
\label{lem:interval-syzygies}
For \(a<b<F(a)\), there are isomorphisms
\begin{equation}
\Omega M[a,b)\cong M[b,F(a)),\qquad
\Omega^2M[a,b)\cong M[F(a),F(b)).
\label{eq:interval-syzygies}
\end{equation}
If \(b=F(a)\), then \(M[a,b)=P_a\) and its first syzygy is zero.
\end{lemma}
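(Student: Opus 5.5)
The plan is to compute both syzygies directly from the projective cover of the uniserial module \(M[a,b)\) and to identify each kernel from its composition factors. Since \(a<b<F(a)=a+c_a\), the module \(M[a,b)=P_a/\rad^{\,b-a}P_a\) is a nonzero, non-projective quotient of \(P_a\). Its projective cover is \(P_a\to M[a,b)\), and the kernel is \(\Omega M[a,b)=\rad^{\,b-a}P_a\). This is a submodule of the uniserial module \(P_a\), so it is uniserial. Its composition factors, from top to socle, are the last \(c_a-(b-a)=F(a)-b\) factors of \(P_a\), namely \(S_b,S_{b+1},\ldots,S_{F(a)-1}\).

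The next step is to identify this kernel with \(M[b,F(a))\). A uniserial module with top \(S_b\) is a quotient of \(P_b\), and a quotient of \(P_b\) is determined by its length. For this I need \(F(a)-b\le c_b\), i.e.\ \(F(a)\le F(b)\), which holds because \(F\) is non-decreasing by \cref{prop:F-basic}. Hence \(\rad^{\,b-a}P_a\cong P_b/\rad^{\,F(a)-b}P_b=M[b,F(a))\), and the interval notation is legitimate since \(b<F(a)\le F(b)\). This gives the first isomorphism.

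For the second syzygy, I would apply the same reasoning to \(M[b,F(a))\) with the pair \((b,F(a))\) in place of \((a,b)\). The projective cover is \(P_b\), and the kernel is \(\rad^{\,F(a)-b}P_b\). If \(F(a)<F(b)\), the first step applied to \(b<F(a)<F(b)\) gives \(\Omega^2M[a,b)\cong M[F(a),F(b))\). If \(F(a)=F(b)\), then \(M[b,F(a))=P_b\), so \(\Omega^2M[a,b)=0=M[F(a),F(a))\). This is consistent with the convention \(M[a,a)=0\), so the formula holds in both cases.

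The final statement is immediate: if \(b=F(a)\), then \(M[a,b)=P_a/\rad^{\,c_a}P_a=P_a\), which is projective, so its first syzygy is zero. I expect the only delicate points to be the length comparison \(F(a)-b\le c_b\) used to land in the interval notation, and the degenerate case \(F(a)=F(b)\). Both reduce to the monotonicity of \(F\). The rest is routine uniserial bookkeeping with indices read modulo \(n\).
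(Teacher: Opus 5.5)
Your proposal is correct and follows the paper's proof essentially step for step. You take the projective cover \(P_a\to M[a,b)\), identify its kernel \(\rad^{\,b-a}P_a\) with \(M[b,F(a))\) by its top and length, and repeat with the projective cover \(P_b\), using the monotonicity \(F(a)\le F(b)\le F(F(a))\) to stay inside the interval notation and treating \(F(a)=F(b)\) as the zero case, exactly as the paper does.
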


\begin{proof}
For \(a<b<F(a)\), the quotient map
\(P_a\twoheadrightarrow M[a,b)\) is a projective cover, since its kernel
\(\rad^{\,b-a}P_a\) is contained in \(\rad P_a\).
This kernel has top \(S_b\) and length \(F(a)-b\), so it is
\(M[b,F(a))\). Since \(F\) is non-decreasing, we have \(F(a)\le F(b)\le F(F(a))\).
The projective cover of \(M[b,F(a))\) is \(P_b\), with kernel
\(\rad^{\,F(a)-b}P_b\cong M[F(a),F(b))\);
this kernel is zero when \(F(b)=F(a)\).
Thus we obtain an exact sequence
\begin{equation}
	0\longrightarrow M[F(a),F(b))
	\longrightarrow P_b\longrightarrow P_a
	\longrightarrow M[a,b)\longrightarrow0.
	\label{eq:two-step-projective-resolution}
\end{equation}
The kernels of these projective covers give
\eqref{eq:interval-syzygies}.
If \(b=F(a)\), the kernel of the quotient map is
\(\rad^{c_a}P_a=0\).
\end{proof}

\begin{remark*}
In \cite[Section~2.2]{KMM26}, $M[a,a+t)$ is denoted by
$(a,t)$, and the first formula in \eqref{eq:interval-syzygies}
is written as $\Omega(a,t)=(a+t,c_a-t)$
for linear Nakayama algebras.
For an indecomposable module \(M=M[a,b)\) with
\(\Omega^2M\neq0\), \eqref{eq:interval-syzygies} gives
\(\topm\Omega^2M\cong\gamma(\topm M)\) and
\(\ell(\Omega^2M)=F(b)-F(a)\).
The former formula also appears in
\cite[Section~3]{Ringel13}.

\end{remark*}

For \(x_r<y\le x_{r+1}\), define
\begin{equation*}
	U_{r,y}:=M[x_r,y)
	=P_{x_r}/\rad^{\,y-x_r}P_{x_r}.
\end{equation*}
In particular, \(U_{r,x_{r+1}}=P_{x_r}\).
The following lemma gives a homological characterization
of \(d(y)\).

\begin{lemma}
	\label{lem:strip}
	Let \(x_r<y<x_{r+1}\). Then the following hold.
	\begin{enumerate}[label=\textnormal{(\arabic*)}]
		\item We have
		\begin{equation*}
			\begin{aligned}
				&x_{r+t}<F^t(y)<x_{r+t+1}
				\quad(0\le t<d(y)),\\
				&F^{d(y)}(y)=x_{r+d(y)+1}.
			\end{aligned}
		\end{equation*}
		
		\item The syzygies of \(U_{r,y}\) are given by
		\begin{equation}
			\begin{aligned}
				\Omega^{2t}U_{r,y}
				&\cong M[x_{r+t},F^t(y))
				&& (0\le t\le d(y)),\\
				\Omega^{2t+1}U_{r,y}
				&\cong M[F^t(y),x_{r+t+1})
				&& (0\le t<d(y)).
			\end{aligned}
			\label{eq:cyclic-top-resolution}
		\end{equation}
		In particular,
		\begin{equation}
			\Omega^{2d(y)}U_{r,y}\cong P_{x_{r+d(y)}},
			\qquad
			\pdim_A U_{r,y}=2d(y).
			\label{eq:quotient-projective-dimension}
		\end{equation}
	\end{enumerate}
In addition, the projective-dimension formula holds
at \(y=x_{r+1}\).
\end{lemma}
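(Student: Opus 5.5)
Part (1) is an induction on \(t\) that uses only monotonicity of \(F\) and \cref{lem:successive-lifts}; part (2) then follows by iterating \cref{lem:interval-syzygies}.

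\textbf{Part (1).} The base case \(t=0\) is the hypothesis \(x_r<y<x_{r+1}\).

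For the inductive step, suppose \(x_{r+t}<F^t(y)<x_{r+t+1}\). Since \(F\) is non-decreasing, applying \(F\) gives \(x_{r+t+1}\le F^{t+1}(y)\le x_{r+t+2}\). The task is to decide which of the three positions occurs.

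Residues in the half-open strip \((x_{r+t+1},x_{r+t+2}]\) are pairwise distinct, because its length is at most \(n\). Hence \(F^{t+1}(y)=x_{r+t+2}\) holds exactly when \(F^{t+1}(y)\in\mathcal C\), i.e.\ when \(\gamma^{t+1}(S_{\bar y})\) is cyclic.

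The remaining task is to exclude the lower endpoint, \(F^{t+1}(y)=x_{r+t+1}\), and this is where \cref{thm:Shen-characterization} enters. Suppose it occurred. Then the cyclic vertex \(S_{\overline{x_{r+t+1}}}\) would be hit by \(F^t(y)\), whose class is non-cyclic since \(t<d(y)\), and also by \(x_{r+t}\), whose class is cyclic. Writing \(z=F^t(y)\), the two equalities \(F(x_{r+t})=F(z)\) and \(x_{r+t}<z\), together with monotonicity, force \(F(x_{r+t}+1)=F(x_{r+t})\). By \cref{lem:Shen-basic}(2) this makes \(S_{\overline{x_{r+t}}}\) red. But \(S_{\overline{x_{r+t}}}\) is cyclic, and every cyclic vertex is black by condition (4) of \cref{thm:Shen-characterization}. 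This contradiction rules out the lower endpoint.

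So, for as long as \(F^{t+1}(y)\) is non-cyclic, it lies strictly inside \((x_{r+t+1},x_{r+t+2})\). At the first time it becomes cyclic, which is \(t+1=d(y)\), it equals \(x_{r+d(y)+1}\). This proves (1).

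\textbf{Part (2).} Induct on \(t\) using \cref{lem:interval-syzygies}. The base case is \(\Omega^0U_{r,y}=M[x_r,y)\).

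Assume \(\Omega^{2t}U_{r,y}\cong M[x_{r+t},F^t(y))\) with \(t<d(y)\). Part (1) gives \(x_{r+t}<F^t(y)<x_{r+t+1}=F(x_{r+t})\), so \cref{lem:interval-syzygies} applies. It yields
\[\Omega^{2t+1}U_{r,y}\cong M[F^t(y),x_{r+t+1}),\qquad \Omega^{2t+2}U_{r,y}\cong M[x_{r+t+1},F^{t+1}(y)).\]
At \(t=d(y)\) we get \(F^{d(y)}(y)=x_{r+d(y)+1}=F(x_{r+d(y)})\), so \(\Omega^{2d(y)}U_{r,y}\) is the projective module \(P_{x_{r+d(y)}}\).

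For the projective dimension, all the syzygies \(\Omega^{j}U_{r,y}\) with \(j<2d(y)\) are nonzero and non-projective. Each such module is \(M[a,b)\) with \(a<b<F(a)\), so it is a proper quotient of \(P_a\), and the strict inequalities come from (1). It follows that \(\pdim_A U_{r,y}=2d(y)\).

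At \(y=x_{r+1}\) we have \(U_{r,y}=P_{x_r}\) and \(d(y)=0\), so the formula reads \(\pdim=0\), which holds.

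The main obstacle is excluding the lower endpoint \(F^{t+1}(y)=x_{r+t+1}\) in part (1). It is the only place where the higher-Auslander hypothesis enters, through the blackness of cyclic vertices.
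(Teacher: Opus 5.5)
Your proposal is correct and follows the paper's proof essentially step for step: the paper also uses blackness of the cyclic vertex $x_{r+t}$ via \cref{lem:Shen-basic}(2), stated directly as $F^{t+1}(y)\ge F(x_{r+t}+1)>F(x_{r+t})=x_{r+t+1}$ rather than by your contradiction, and then iterates \cref{lem:interval-syzygies} exactly as you do. Your remark about pairwise distinct residues is not needed, since $x_{r+t+2}$ is the only element of $\mathcal C$ in $(x_{r+t+1},x_{r+t+2}]$ simply because $\{x_r\}$ is the increasing enumeration of $\mathcal C$.
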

\begin{proof}
	\noindent(1)
	We proceed by induction on \(t\).
	For \(t=0\), the assertion follows from
	\(x_r<y<x_{r+1}\).
	Suppose that \(0\le t<d(y)\) and $x_{r+t}<F^t(y)<x_{r+t+1}$.
	Since \(x_{r+t}\) represents a cyclic vertex,
	it is black by \cref{thm:Shen-characterization}.
	Hence \cref{lem:Shen-basic} gives
	\(F(x_{r+t}+1)>F(x_{r+t})\).
	Since \(F\) is non-decreasing, we obtain
	\[
	x_{r+t+1}=F(x_{r+t})<F(x_{r+t}+1)
	\le F^{t+1}(y)\le F(x_{r+t+1})=x_{r+t+2}.
	\]
	If \(t+1<d(y)\), then \(F^{t+1}(y)\) is
	non-cyclic, so the last inequality is strict.
	If \(t+1=d(y)\), then \(F^{d(y)}(y)\) is cyclic.
	Since \(x_{r+d(y)+1}\) is the only cyclic lift in
	\((x_{r+d(y)},x_{r+d(y)+1}]\), we have
	\(F^{d(y)}(y)=x_{r+d(y)+1}\).
	This completes the induction.
	
	\medskip
	\noindent(2)
	For each \(0\le t<d(y)\), part (1) 
	shows that \(M[x_{r+t},F^t(y))\) is a nonzero
	proper quotient of \(P_{x_{r+t}}\).
	Since every proper nonzero quotient of an indecomposable projective 
	$A$-module is non-projective, 
	the even syzygies are non-projective
for \(0\le t<d(y)\).
	Moreover,
	$F^t(y)<x_{r+t+1}<F^{t+1}(y)$,
	so \(M[F^t(y),x_{r+t+1})\) is nonzero and
	non-projective.
	Repeated application of \cref{lem:interval-syzygies} 
	yields \eqref{eq:cyclic-top-resolution}.
	In particular,
	\[
	\Omega^{2d(y)}U_{r,y}
	\cong M[x_{r+d(y)},x_{r+d(y)+1})
	=P_{x_{r+d(y)}}.
	\]
	Since all preceding syzygies are non-projective,
	we have \(\pdim_A U_{r,y}=2d(y)\).
	For \(y=x_{r+1}\), we have
	\(U_{r,y}=P_{x_r}\) and \(d(y)=0\).
\end{proof}

\begin{lemma}
	\label{lem:last-before-cycle}
	Let \(x_r<y<x_{r+1}\).  Then the following hold.
	\begin{enumerate}[label=\textnormal{(\arabic*)}]
		\item We have
		\begin{equation*}
			F^{d(y)-1}(y)=x_{r+d(y)}-1.
		\end{equation*}
		
		\item We have
		\begin{equation*}
			\begin{aligned}
				\Omega^{2d(y)-2}U_{r,y}
				&\cong P_{x_{r+d(y)-1}}/
				\soc P_{x_{r+d(y)-1}},\\
				\Omega^{2d(y)-1}U_{r,y}
				&\cong S_{x_{r+d(y)}-1},\\
				\Omega^{2d(y)}U_{r,y}
				&\cong P_{x_{r+d(y)}}.
			\end{aligned}
		\end{equation*}
	\end{enumerate}
\end{lemma}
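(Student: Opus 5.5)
The plan is to read off part (2) from \cref{lem:strip} once part (1) is established. Part (1) should follow from condition (3) of \cref{thm:Shen-characterization}: every cyclic vertex has at most two predecessors.

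First, \(d(y)\ge1\). By \cref{lem:successive-lifts}, the cyclic lifts are exactly the \(x_s\), so \(y\) with \(x_r<y<x_{r+1}\) is not a cyclic lift, and \(S_{\bar y}\) is non-cyclic. Put \(d=d(y)\) and \(z=F^{d-1}(y)\). Applying \cref{lem:strip}(1) with \(t=d-1\) gives
\[
x_{r+d-1}<z<x_{r+d},\qquad F(z)=F^{d}(y)=x_{r+d+1}=F(x_{r+d}).
\]
Since \(F\) is non-decreasing (\cref{prop:F-basic}), every integer \(w\) with \(z\le w\le x_{r+d}\) satisfies \(F(w)=x_{r+d+1}\).

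Suppose \(z<x_{r+d}-1\). Then \(z\), \(x_{r+d}-1\) and \(x_{r+d}\) are three distinct integers in the interval \((x_{r+d-1},x_{r+d}]\), which has length at most \(n\). By the remark following \cref{lem:successive-lifts}, they lie in pairwise distinct residue classes modulo \(n\). All three are sent by \(F\) to \(x_{r+d+1}\). Hence the cyclic vertex \(S_{\overline{x_{r+d+1}}}\) has at least three distinct predecessors in \(R(A)\), contradicting condition (3) of \cref{thm:Shen-characterization}. Therefore \(z=x_{r+d}-1\), which is part (1).

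For part (2), I substitute \(t=d-1\) into \eqref{eq:cyclic-top-resolution}; this is allowed since \(0\le d-1<d\). Together with part (1) it gives
\[
\Omega^{2d-2}U_{r,y}\cong M[x_{r+d-1},x_{r+d}-1),\qquad
\Omega^{2d-1}U_{r,y}\cong M[x_{r+d}-1,x_{r+d}).
\]
Since \(P_{x_{r+d-1}}=M[x_{r+d-1},x_{r+d})\), the first module is the quotient of \(P_{x_{r+d-1}}\) of colength one, namely \(P_{x_{r+d-1}}/\soc P_{x_{r+d-1}}\). The second module has length one and top \(S_{x_{r+d}-1}\), so it is that simple module. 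The last isomorphism is already \eqref{eq:quotient-projective-dimension}.

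The only non-bookkeeping step is the predecessor count in part (1). Its main care point is checking that the three predecessors are distinct vertices of \(R(A)\), not just distinct integers, and the bound \(x_{s+1}-x_s\le n\) supplies exactly that.
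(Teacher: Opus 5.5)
Your proposal is correct and follows the paper's proof. Both obtain part (1) from the fact that, if \(z<x_{r+d(y)}-1\), three pairwise non-congruent integers in \((x_{r+d(y)-1},x_{r+d(y)}]\) map to \(x_{r+d(y)+1}\), contradicting condition (3) of \cref{thm:Shen-characterization}; the paper uses \(z,z+1,x_{r+d(y)}\) where you use \(z,x_{r+d(y)}-1,x_{r+d(y)}\). Both then read off part (2) by setting \(t=d(y)-1\) in \cref{lem:strip}.
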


\begin{proof}
	\noindent(1)
	Set \(z=F^{d(y)-1}(y)\).
	By \cref{lem:strip},
	\(x_{r+d(y)-1}<z<x_{r+d(y)}\) and
	\(F(z)=x_{r+d(y)+1}=F(x_{r+d(y)})\).
	If \(z\le x_{r+d(y)}-2\), the monotonicity of
	\(F\) gives
	\[
	F(z)=F(z+1)=F(x_{r+d(y)})=x_{r+d(y)+1}.
	\]
	The integers
	\(z,z+1,x_{r+d(y)}\) represent three distinct
	vertices modulo \(n\).
	Thus the cyclic vertex represented by
	\(x_{r+d(y)+1}\) has at least three predecessors,
	contrary to \cref{thm:Shen-characterization}.
	Hence \(z=x_{r+d(y)}-1\).
	
	\medskip
	\noindent(2)
	Combining (1) with
	\cref{lem:strip}, we obtain (2).
\end{proof}

\begin{theorem}
\label{thm:strict-decrease}
If \(x_r<u<v\le x_{r+1}\), then \(d(u)>d(v)\). Moreover, we have $\pdim_A U_{r,u}>\pdim_A U_{r,v}$.
\end{theorem}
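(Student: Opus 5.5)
Since \cref{lem:strip} gives \(\pdim_A U_{r,y}=2d(y)\) for every \(x_r<y\le x_{r+1}\) (including \(y=x_{r+1}\), where \(d=0\)), the second assertion follows from the first. It therefore suffices to show \(d(u)>d(v)\). If \(v=x_{r+1}\), then \(d(v)=0\), and \(u\) is non-cyclic because \(x_{r+1}\) is the only cyclic lift in \((x_r,x_{r+1}]\); hence \(d(u)\ge1>0\). So we may assume \(x_r<u<v<x_{r+1}\). By induction it is enough to treat \(v=u+1\): the strict inequality for consecutive integers then chains along \(u,u+1,\dots,v\).

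\textbf{Step 1: weak monotonicity.} For \(x_r<u<u+1<x_{r+1}\), iterate \(F\) using \cref{lem:strip}(1). Since \(F\) is non-decreasing, \(F^t(u)\le F^t(u+1)\) for all \(t\). At \(t=d(u)\) we have \(F^{d(u)}(u)=x_{r+d(u)+1}\). If \(d(u+1)>d(u)\), then \cref{lem:strip}(1) applied to \(u+1\) gives \(F^{d(u)}(u+1)<x_{r+d(u)+1}\), contradicting \(F^{d(u)}(u)\le F^{d(u)}(u+1)\). Hence \(d(u)\ge d(u+1)\).

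\textbf{Step 2: ruling out equality.} This is the main obstacle. Suppose \(d(u)=d(u+1)=:d\ge1\). By \cref{lem:last-before-cycle}(1),
\[
F^{d-1}(u)=x_{r+d}-1=F^{d-1}(u+1).
\]
Let \(s\) be the least index with \(F^{s}(u)=F^{s}(u+1)\); then \(1\le s\le d-1\) because \(u\ne u+1\). So \(d\ge2\) in this case, and if \(d=1\) equality already gives \(u=u+1\), a contradiction. Put \(a=F^{s-1}(u)<b=F^{s-1}(u+1)\). Both lie strictly inside \((x_{r+s-1},x_{r+s})\) by \cref{lem:strip}(1), so they represent distinct residues modulo \(n\); they are non-cyclic and satisfy \(F(a)=F(b)\). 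The common image \(F(a)\) lies in \((x_{r+s},x_{r+s+1})\) since \(s<d\), so it is a non-cyclic vertex with two distinct predecessors \(\bar a\ne\bar b\). This contradicts condition (2) of \cref{thm:Shen-characterization}.

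Therefore \(d(u)>d(u+1)\). Chaining these inequalities gives \(d(u)>d(v)\), and \cref{lem:strip} then yields \(\pdim_A U_{r,u}=2d(u)>2d(v)=\pdim_A U_{r,v}\). The delicate point is checking that the collision point \(F^s(u)\) is genuinely non-cyclic, which uses \(s\le d-1<d\), and that \(a\) and \(b\) are distinct modulo \(n\), which holds because both lie in an open strip of length at most \(n\).
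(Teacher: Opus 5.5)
Your proof is correct and uses the same ingredients as the paper's: the minimal-collision argument against condition~(2) of \cref{thm:Shen-characterization}, combined with \cref{lem:strip,lem:last-before-cycle}. The difference is only in organization. You split \(d(v)\ge d(u)\) into the case \(d(v)>d(u)\), handled directly by \cref{lem:strip} at level \(d(u)\), and the case \(d(v)=d(u)\), handled by applying \cref{lem:last-before-cycle} to both points; the paper treats both cases in one contradiction, and your reduction to \(v=u+1\) is harmless but unnecessary since both of your steps work verbatim for arbitrary \(u<v\).
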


\begin{proof}
	If \(v=x_{r+1}\), then \(d(v)=0<d(u)\). 
	Moreover, \(U_{r,v}=P_{x_r}\), whereas 
	\(\pdim_A U_{r,u}=2d(u)>0\). 
	Thus both assertions hold.

	Suppose now that \(v<x_{r+1}\) and \(d(v)\ge d(u)\).
	Since \(F\) is non-decreasing, we have
	$F^{d(u)-1}(u)\le F^{d(u)-1}(v)$.
	We claim that this inequality is strict.
	If \(d(u)=1\), this follows from \(u<v\).
	Otherwise, suppose that equality holds, and let
	\(t\ge1\) be the smallest integer such that
	\(F^t(u)=F^t(v)\).
	Then \(t\le d(u)-1\).
	By \cref{lem:strip}, the distinct integers
	\(F^{t-1}(u)\) and \(F^{t-1}(v)\) lie in
	\((x_{r+t-1},x_{r+t})\).
	They represent
	distinct vertices modulo \(n\).
	Since \(t<d(u)\), their common image is
	non-cyclic, contradicting
	\cref{thm:Shen-characterization}.
	
	Hence \(F^{d(u)-1}(u)<F^{d(u)-1}(v)\).
	By \cref{lem:last-before-cycle},
	\(F^{d(u)-1}(u)=x_{r+d(u)}-1\), while
	\cref{lem:strip} gives
	\(F^{d(u)-1}(v)<x_{r+d(u)}\).
	Thus
	\[
	x_{r+d(u)}-1
	<F^{d(u)-1}(v)<x_{r+d(u)},
	\]
	which is impossible for an integer.
	Therefore \(d(u)>d(v)\), and the result follows
	from \cref{lem:strip}.
\end{proof}

\begin{corollary}
	\label{cor:gap-consequences}
	For consecutive cyclic lifts \(x_r<x_{r+1}\),
	the following hold.
	\begin{enumerate}[label=\textnormal{(\arabic*)}]
		\item If a leaf is represented in
		\((x_r,x_{r+1})\), its representative is
		\(x_r+1\).
		
		\item \(c_{x_r}=x_{r+1}-x_r\le m+1\).
		
		\item If \(c_{x_r}=m+1\), then
		\[
		d(x_r+t)=m-t+1
		\qquad (1\le t\le m+1),
		\]
		and \(S_{x_r+1}\) is a leaf.
	\end{enumerate}
\end{corollary}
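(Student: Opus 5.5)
The plan is to read all three parts off \cref{thm:strict-decrease}. That result says the function \(y\mapsto d(y)\) is strictly decreasing on the integers of \((x_r,x_{r+1}]\), and it satisfies \(d(x_{r+1})=0\). Together with the bound \(\dist(A)=m\) and the leaf condition from \cref{thm:Shen-characterization}, this should give everything.

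For (1), suppose \(S_{\bar y}\) is a leaf with \(x_r<y<x_{r+1}\). By \cref{thm:Shen-characterization}(1), \(d(y)=m=\dist(A)\), so \(d(y)\) is the largest value \(d\) can take anywhere. If \(y>x_r+1\), then \(x_r<x_r+1<y\), and \cref{thm:strict-decrease} gives \(d(x_r+1)>d(y)=m\), which contradicts \(\dist(A)=m\). Hence \(y=x_r+1\). The residues in \((x_r,x_{r+1})\) are distinct because the gap is less than \(n\), so the representative is well defined.

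For (2), \(c_{x_r}=F(x_r)-x_r=x_{r+1}-x_r\) by \cref{lem:successive-lifts}. The integers \(x_r+1,\dots,x_{r+1}\) number \(x_{r+1}-x_r\). Their \(d\)-values are strictly decreasing, the last one is \(0\), and all of them are at most \(m\). A strictly decreasing sequence of integers in \([0,m]\) has at most \(m+1\) terms, so \(x_{r+1}-x_r\le m+1\).

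For (3), if equality holds, the \(m+1\) values \(d(x_r+1)>\dots>d(x_{r+1})=0\) must be exactly \(m,m-1,\dots,0\). This forces \(d(x_r+t)=m-t+1\). It remains to show that \(S_{x_r+1}\) is a leaf, and I expect this to be the only step that needs real work. A distance-\(m\) vertex need not be a leaf a priori. But if \(S_{x_r+1}\) had a predecessor \(S_{\bar z}\), that predecessor would be non-cyclic, since \(x_r+1\) is not cyclic when \(m\ge1\), and it would satisfy \(d(z)=d(x_r+1)+1=m+1\). This contradicts \(\dist(A)=m\). So \(S_{x_r+1}\) has no predecessor, i.e.\ it is a leaf. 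This argument does not even need the lifts to lie in a particular strip: any predecessor strictly increases the distance by one when the target is non-cyclic.
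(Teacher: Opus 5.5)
Your proposal is correct and follows the paper's proof essentially step for step. All three parts are read off from the strict decrease of \(d\) on \((x_r,x_{r+1}]\) given by \cref{thm:strict-decrease}, combined with \(\dist(A)=m\) and the leaf-distance condition of \cref{thm:Shen-characterization}; the leaf claim in (3) is obtained exactly as in the paper, by observing that a predecessor of \(S_{x_r+1}\) would have distance \(m+1\).
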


\begin{proof}
	Every leaf has distance \(m\) by
	\cref{thm:Shen-characterization}.
	Suppose that a leaf is represented by an integer
	\(v\) with \(x_r+1<v<x_{r+1}\).
	Then \cref{thm:strict-decrease} gives
	\(d(x_r+1)>d(v)=m\),
	contradicting \(\dist(A)=m\).
	This proves (1).
	
By \cref{thm:strict-decrease}, we have
\[
m\ge d(x_r+1)>d(x_r+2)>\cdots>d(x_{r+1})=0.
\]
Hence \(x_{r+1}-x_r\le m+1\), proving (2).
	
Suppose that \(c_{x_r}=m+1\).
The preceding inequalities then give
\[
d(x_r+t)=m-t+1
\qquad (1\le t\le m+1).
\]
In particular, \(d(x_r+1)=m\).
If \(S_{x_r+1}\) had a predecessor,
its distance would be \(m+1\),
contradicting \(\dist(A)=m\).
Hence \(S_{x_r+1}\) is a leaf, proving (3).
\end{proof}

We now determine the projective and dominant dimensions
of non-cyclic simple modules.

\begin{proposition}
	\label{prop:noncyclic-simple-dimensions}
	If \(S_i\) is non-cyclic, then
	\[
	\pdim_A S_i=2d(i)-1,\qquad
	\domdim_A S_i=2m-2d(i)+1\ge1.
	\]
In particular, \(I(S_i)\) is projective.
\end{proposition}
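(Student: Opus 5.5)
The plan is to realize \(S_i\) as the first syzygy of a cyclic-top quotient and read off its projective dimension from \cref{lem:strip}. Then the Cruz--Marczinzik formula \(\pdim+\domdim=g\) gives the dominant dimension. That formula is quoted in the introduction as holding for non-projective-injective modules over a higher Auslander algebra, and I will use it as a cited fact.

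\textbf{Step 1: embed \(S_i\) in a cyclic-top resolution.} Choose an integer representative \(i\) and the index \(r\) with \(x_r<i<x_{r+1}\). This is possible because \(S_i\) is non-cyclic, so \(i\notin\mathcal C\). Consider \(y=i+1\), so \(x_r<y\le x_{r+1}\). By \cref{lem:interval-syzygies}, \(\Omega U_{r,y}=\Omega M[x_r,i+1)\cong M[i+1,x_{r+1})\), which is not what we want. Instead I will use the sequence \(0\to M[i+1,F(i))\to P_i\to S_i\to 0\) directly, i.e. \(\Omega S_i\cong M[i+1,F(i))\), and then compute \(\Omega^2S_i\cong M[F(i),F(i+1))\). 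Comparing with \cref{lem:strip}, \(F(i)\) lies in \((x_{r+1},x_{r+2}]\), with equality to \(x_{r+2}\) exactly when \(d(i)=1\).

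A cleaner route is to observe that \(S_i=M[i,i+1)\). By \cref{lem:interval-syzygies}, \(\Omega^2 S_i\cong M[F(i),F(i+1))\). I split into cases.

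\textbf{Case (a): \(i+1<x_{r+1}\).} \cref{thm:strict-decrease} gives \(d(i+1)=d(i)-1\) or smaller. Then \(F(i)=F^1(i)\) lies strictly between \(x_{r+1}\) and \(x_{r+2}\) when \(d(i)\ge2\), and \(F(i+1)\) lies in the same gap. This exhibits \(\Omega^2S_i\) as \(M[F(i),F(i+1))\). This is a subquotient rather than a cyclic-top quotient, so instead I would compare \(S_i\) with the exact sequence \(0\to M[x_r,i)\to U_{r,i+1}\to S_i\to0\) (when \(i>x_r+1\)). The long exact sequence together with \(\pdim U_{r,i+1}=2d(i+1)<2d(i)=\pdim U_{r,i}\) gives the inequality \(\pdim S_i\le 2d(i)-1\), because \(\Omega S_i\) is a submodule-type shift.

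\textbf{Case (b): \(i=x_r+1\).} Here I use \(0\to S_i\to\ \)… or the symmetric sequence \(0\to S_{i}\to U_{r,i+1}\to U_{r,i}\to0\). This is the sequence I will actually use in all cases: \(\rad^{i-x_r}U_{r,i+1}\cong S_i\). The pdims of the outer terms are \(2d(i+1)<2d(i)\) by \cref{lem:strip} and \cref{thm:strict-decrease}. Hence \(\Omega U_{r,i}\) surjects onto \(S_i\) with kernel \(\Omega U_{r,i+1}\)-related terms, and the horseshoe/long exact sequence forces \(\pdim S_i=\pdim U_{r,i}-1=2d(i)-1\). This holds since the middle term has strictly smaller, even projective dimension, so \(\Ext^{2d(i)}(U_{r,i},-)\neq0\) must be absorbed by \(\Ext^{2d(i)-1}(S_i,-)\).

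\textbf{Step 2: dominant dimension.} \(S_i\) is not projective-injective, since \(\pdim S_i\ge1\). Indeed \(d(i)\ge1\), so the projective dimension is positive. The formula \(\pdim+\domdim=2m\) then gives \(\domdim S_i=2m-2d(i)+1\). This is \(\ge1\) because \(d(i)\le\dist(A)=m\) by \cref{thm:Shen-characterization}. Consequently \(I(S_i)\) is projective.

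The main obstacle is the dimension-shift argument in Step 1: verifying that the relevant \(\Ext\) groups do not cancel. I would try first to avoid \(\Ext\) entirely by computing explicitly: \(\Omega S_i\cong M[i+1,F(i))\), and \(\Omega^2S_i\cong M[F(i),F(i+1))\). Iterating with \cref{lem:strip} and \cref{lem:last-before-cycle}, the last step lands on \(S_{x_{r+d(i)}-1}\)-type modules whose syzygy is projective, giving exactly \(2d(i)-1\) steps.
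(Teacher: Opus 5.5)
Your operative argument is the paper's proof: the sequence $0\to S_i\to U_{r,i+1}\to U_{r,i}\to 0$ with $\pdim U_{r,i+1}=2d(i+1)<2d(i)=\pdim U_{r,i}$ (from \cref{lem:strip} and \cref{thm:strict-decrease}), a dimension shift in the long exact $\Ext$ sequence, and then the Cruz--Marczinzik formula together with $d(i)\le m$. Delete the Case (a) detour, since the sequence $0\to M[x_r,i)\to U_{r,i+1}\to S_i\to 0$ does not exist ($S_i$ is the socle, not the top, of $U_{r,i+1}$); also state the upper bound $\Ext^q(S_i,-)=0$ for $q\ge 2d(i)$ explicitly rather than leaving it inside ``forces''.
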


\begin{proof}
	Choose an integer representative \(i\) such that
	\(x_r<i<x_{r+1}\) for some \(r\).
	We have a short exact sequence
	\begin{equation}
		0\longrightarrow S_i
		\longrightarrow U_{r,i+1}
		\longrightarrow U_{r,i}
		\longrightarrow0.
		\label{eq:adjacent-quotients}
	\end{equation}
	Let \(p=2d(i)\ge2\). By \cref{lem:strip,thm:strict-decrease},
	we have \(\pdim_A U_{r,i}=p\) and
	\(\pdim_A U_{r,i+1}\le p-2\), since both projective dimensions are even.
	Since \(\pdim_A U_{r,i+1}\le p-2\), the
    long exact Ext sequence yields
	a natural isomorphism
	\[
	\Ext_A^{p-1}(S_i,N)
	\cong \Ext_A^p(U_{r,i},N)
	\]
	for every \(A\)-module \(N\).
	Since \(\pdim_A U_{r,i}=p\), the right-hand
	side is nonzero for some \(N\).
	Moreover, \(\Ext_A^q(S_i,N)=0\) for all
	\(N\) and \(q\ge p\).
	Hence,
	\[\pdim_A S_i=p-1=2d(i)-1.\]
	
	Since \(S_i\) is non-projective, \cite[Corollary~3.9]{CM} gives
	\[
	\pdim_A S_i+\domdim_A S_i=2m.
	\]
	Consequently,
	\(\domdim_A S_i=2m-2d(i)+1\ge1\).
	Thus \(I(S_i)\) is projective.
\end{proof}

\begin{corollary}
\label{cor:cyclic-radical-control}
If \(S_i\) is cyclic, the injective envelope of every simple composition
factor of \(\rad P_i\) is projective.
\end{corollary}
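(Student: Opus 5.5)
Choose the integer representative \(i=x_r\) of \(S_i\), so that \(P_i=P_{x_r}=M[x_r,x_{r+1})\). The composition factors of \(\rad P_{x_r}\) are then \(S_{x_r+1},S_{x_r+2},\dots,S_{x_{r+1}-1}\). The plan is to show that each of these simples is non-cyclic and then apply \cref{prop:noncyclic-simple-dimensions}.

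First, the integers \(y\) with \(x_r<y<x_{r+1}\) lie strictly between consecutive elements of the increasing enumeration of \(\mathcal C\), so none of them belongs to \(\mathcal C\). Since \(\mathcal C\) consists of all integer lifts of cyclic vertices, the residue class \(S_{\bar y}\) is not a cyclic vertex of \(R(A)\). Hence every simple composition factor of \(\rad P_{x_r}\) is non-cyclic.

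Next, \cref{prop:noncyclic-simple-dimensions} gives \(\domdim_A S_y=2m-2d(y)+1\ge1\) for each such \(y\). In particular \(I(S_y)\) is projective, which proves the claim.

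If \(c_{x_r}=1\), then \(\rad P_i=0\) and the statement is vacuous. The argument is essentially immediate. The only point needing care is that non-membership of \(y\) in \(\mathcal C\) really means \(S_{\bar y}\) is non-cyclic, and this holds because \(\mathcal C\) is closed under translation by \(n\) and contains every lift of every cyclic vertex.
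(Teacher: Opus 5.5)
Your proposal is correct and follows essentially the same argument as the paper: take the representative \(x_r\) so that \(P_i\cong M[x_r,x_{r+1})\), note that the composition factors of \(\rad P_i\) are the simples \(S_j\) with \(x_r<j<x_{r+1}\), which are non-cyclic because \(x_r,x_{r+1}\) are consecutive cyclic lifts, and then apply \cref{prop:noncyclic-simple-dimensions}.
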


\begin{proof}
Choose \(r\in\mathbb Z\) such that \(S_i=S_{x_r}\).
	Since \(P_i\cong M[x_r,x_{r+1})\), every simple
	composition factor of \(\rad P_i\) is of the form
	\(S_j\) with \(x_r<j<x_{r+1}\).
	These simple modules are non-cyclic, since
	\(x_r\) and \(x_{r+1}\) are consecutive cyclic lifts.
	The result follows from
	\cref{prop:noncyclic-simple-dimensions}.
\end{proof}

\begin{example}
	\label{ex:kupisch-344}
	Let \(A\) be a Nakayama algebra with Kupisch series
	\((3,4,4)\).
	By \cref{lem:interval-syzygies},
	\(\pdim_A S_0=4\), \(\pdim_A S_1=3\), and
	\(\pdim_A S_2=1\).
	The injective envelopes are
	\(I(S_0)\cong M[1,4)\), \(I(S_1)\cong P_1\),
	and \(I(S_2)\cong P_2\).
	The minimal injective coresolution of \(P_0\) is
	\[
	0\longrightarrow P_0\longrightarrow P_2
	\longrightarrow P_2\longrightarrow P_1
	\longrightarrow P_1\longrightarrow I(S_0)
	\longrightarrow0.
	\]
	The first four injective terms are projective,
	whereas \(I(S_0)\) is not.
	Thus 
	\[\gldim A=\domdim A=4.\]
	
\begin{figure}[htbp]
	\begin{tikzpicture}[>=stealth]
		\node (s1) at (4.1,0) {$1$};
		\node (s2) at (5.6,0) {$2$};
		\node (s0) at (7.1,0) {$0$};
		
		\draw[->] (s1) -- (s2);
		\draw[->] (s2) -- (s0);
		\draw[->] ([xshift=-4pt,yshift=6pt]s0.east)
		arc[start angle=160,end angle=-160,radius=6mm];
	\end{tikzpicture}
	\caption{The resolution quiver of $A$.}
	\label{resolution-quiver}
\end{figure}

Taking \(x_0=0\), we have \(x_r=3r\) and
\((d(0),d(1),d(2))=(0,2,1)\).
By \cref{lem:strip}, the quotients
\(U_{0,t}=M[0,t)\), for \(1\le t\le3\), satisfy
\[
\pdim_A U_{0,1}=4,\qquad
\pdim_A U_{0,2}=2,\qquad
\pdim_A U_{0,3}=0.
\]
Here \(U_{0,1}=S_0\) and \(U_{0,3}=P_0\).
For the non-cyclic simple modules,
\(\domdim_A S_1=1\) and \(\domdim_A S_2=3\).
Note that \(\pdim_A S_0=2d(1)=4\), although
\(S_0\) is cyclic and \(d(0)=0\).

\end{example}

\section{Proof of the main result}
\label{sec:application}

Throughout this section, let \(A\) be a higher Auslander Nakayama algebra with global dimension \(g=2m\), where \(m\ge1\).

\begin{proposition}
\label{prop:radical-control}
Let \(P\) be an indecomposable projective non-injective 
\(A\)-module.  Then the injective envelope of every simple composition
factor of \(\rad P\) is projective.
\end{proposition}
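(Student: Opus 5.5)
Write \(P=P_i\). We split according to whether \(S_i\) is cyclic in \(R(A)\). If \(S_i\) is cyclic, then \cref{cor:cyclic-radical-control} already says that every simple composition factor of \(\rad P_i\) has projective injective envelope. It does not even use the non-injectivity of \(P\). So we may assume \(S_i\) is non-cyclic.

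Choose the representative \(i\) with \(x_r<i<x_{r+1}\). The composition factors of \(\rad P_i\) are \(S_{i+1},\dots,S_{F(i)-1}\). Any such factor \(S_j\) that is non-cyclic is handled by \cref{prop:noncyclic-simple-dimensions}, since \(I(S_j)\) is projective. It remains to treat the cyclic factors \(S_j\) with \(i<j<F(i)\). For these we use \cref{lem:Shen-basic}(3): \(I(S_j)\) is projective if and only if \(j+1\in\operatorname{Im}F\).

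First locate \(F(i)\). By \cref{lem:strip}(1), either \(x_{r+1}<F(i)<x_{r+2}\) or \(F(i)=x_{r+2}\). Hence the cyclic lifts in the open interval \((i,F(i))\) are only \(x_{r+1}\), and possibly no others. So the only problematic factor is \(S_{x_{r+1}}\), and we must show \(x_{r+1}+1\in\operatorname{Im}F\). This is also where the non-injectivity of \(P\) enters. By \cref{lem:Shen-basic}(1), \(P_i\) is non-injective exactly when \(F(i)=F(i-1)\). I would exploit the gap structure here. By \cref{lem:strip} applied to \(i\) and \(i-1\) (with \(i-1=x_r\) allowed), together with the Kupisch monotonicity, we get \(F(x_r)=x_{r+1}<F(x_r+1)\le F(i)\).

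The main obstacle is producing a preimage of \(x_{r+1}+1\). I would try to show \(F(x_r+1)=x_{r+1}+1\). By \cref{thm:strict-decrease}, the values \(F(x_r+1)\le F(x_r+2)\le\cdots\) are constrained: distances strictly decrease across \((x_r,x_{r+1}]\). A jump \(F(x_r+1)\ge x_{r+1}+2\) would force the integers of \((x_{r+1},F(x_r+1))\) to have no preimage in \((x_r,x_{r+1}]\). Then \(S_{x_{r+1}+1}\) would be a leaf with representative in \((x_{r+1},x_{r+2})\). By \cref{cor:gap-consequences}(1), such a leaf is allowed, so more care is needed.

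If this fails, I would fall back on \cref{cor:CM-top-control}. Applied to the non-injective \(P_i\), it gives that \(I(S_{i-1})\cong D(Ae_i)\) is projective, since \(I(\topm P_i)\) is the dual of \(Ae_i\) by the proof of that corollary. Translated via \cref{lem:Shen-basic}(3), this says \(i\in\operatorname{Im}F\). Combined with \(F(i-1)=F(i)\) and \cref{thm:Shen-characterization}(2),(3) on predecessor counts, this should pin down \(F(i-1)=F(i)\) as a cyclic value, namely \(F(i)=x_{r+2}\). It should also force the preimage structure near \(x_{r+1}\) so that \(x_{r+1}+1=F(y)\) for some \(y\in(x_r,x_{r+1})\). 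This last bookkeeping is where I expect the real work.
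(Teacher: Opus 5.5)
Your cyclic case is fine, but the non-cyclic case is never finished. You end with bookkeeping you have not carried out, and your fallback through \cref{cor:CM-top-control} is not available here. That corollary needs the extra hypothesis $\pdim_A\tau_g(DA)\le g-1$, which the proposition does not assume. Moreover, the proposition is used in the proof of $(3)\Rightarrow(1)$ of \cref{thm:main}, where that hypothesis is absent. There is also an index slip: $\topm P_i=S_i$, so the corollary would give that $I(S_i)$ is projective, i.e.\ $i+1\in\operatorname{Im}F$, not $i\in\operatorname{Im}F$.

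The missing idea is that the non-cyclic case cannot occur: non-injectivity of $P_i$ forces $S_i$ to be cyclic. By \cref{lem:Shen-basic}, $F(i-1)=F(i)$, and since $n\ge2$ the distinct vertices $S_{i-1},S_i$ are both predecessors of $S_{F(i)}$. Then \cref{thm:Shen-characterization}(2) makes $S_{F(i)}$ cyclic, and (3) says these are its only predecessors, so one of them is its predecessor on the cycle. By (4), the red vertex $S_{i-1}$ is not cyclic, hence $S_i$ is cyclic, and \cref{cor:cyclic-radical-control} finishes the proof. This is exactly the paper's argument.

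You could also have seen the contradiction in your own coordinates, with $x_r<i<x_{r+1}$. If $i-1=x_r$, your inequality $F(x_r)<F(x_r+1)$ (blackness of $x_r$) already contradicts $F(i-1)=F(i)$. If $x_r<i-1<i<x_{r+1}$, then both are non-cyclic and $F(i-1)=F(i)$ gives $d(i-1)=1+d(F(i))=d(i)$, contradicting \cref{thm:strict-decrease}.

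Your attempt to prove $F(x_r+1)=x_{r+1}+1$ from the gap structure alone cannot succeed without using non-injectivity. For the Kupisch series $(3,4,4)$ one has $x_0=0$ and $x_1=3$, but $F(1)=5$.
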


\begin{proof}
	Choose \(i\) such that \(P\cong P_i\).
	Since \(P_i\) is not injective,
	\cref{lem:Shen-basic} implies that \(S_{i-1}\) is red.
	Hence \(c_{i-1}=c_i+1\) and
	\(F(i-1)=F(i)\).
	
	Since \(A\) has positive finite global dimension,
	it has at least two simple modules.
	Thus \(S_{i-1}\) and \(S_i\) are distinct predecessors
	of \(S_{F(i)}\).
	By \cref{thm:Shen-characterization},
	\(S_{F(i)}\) must be cyclic.
	Moreover, \(S_{i-1}\) is non-cyclic, since
	every cyclic vertex is black.
	As \(S_{F(i)}\) has at most two predecessors,
	one of which lies on the cycle,
	\(S_i\) must be cyclic.
	The result now follows from
	\cref{cor:cyclic-radical-control}.
\end{proof}

\begin{lemma}
	\label{lem:factor-hom-nakayama}
	Let \(A\) be a finite-dimensional algebra, and let
	\(e,f\) be primitive idempotents.
	Put \(S_e=eA/e\rad A\). Then \(S_e\) is a
	composition factor of \(fA\) if and only if \(fAe\ne0\).
\end{lemma}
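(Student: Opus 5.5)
The argument goes through the standard isomorphism \(\Hom_A(eA,M)\cong Me\), valid for every right \(A\)-module \(M\) and every idempotent \(e\); it sends \(\varphi\) to \(\varphi(e)\). Taking \(M=fA\) gives \(\Hom_A(eA,fA)\cong fAe\). It therefore suffices to show that \(S_e\) is a composition factor of \(fA\) if and only if \(\Hom_A(eA,fA)\neq0\). This holds for any finite-dimensional module \(M\) in place of \(fA\), and that is the form in which I will prove it.

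\emph{The Hom functor is exact and measures \(S_e\).} Since \(eA\) is projective, the functor \(\Hom_A(eA,-)\cong(-)e\) is exact. Hence \(\dim_k Me\) is additive on short exact sequences, and by induction along a composition series,
\[
\dim_k Me=\sum_{S}[M:S]\,\dim_k Se,
\]
where the sum runs over the isomorphism classes of simple modules \(S\).

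\emph{Evaluation on simple modules.} For a simple module \(S\), we have \(\Hom_A(eA,S)\neq0\) exactly when there is a surjection \(eA\to S\). Such a surjection exists if and only if \(S\cong\topm(eA)=S_e\), because \(e\) is primitive and \(eA\) is local. Moreover, \(\dim_k S_ee=\dim_k\End_A(S_e)=1\), since \(k\) is algebraically closed. This last fact is not needed; only nonvanishing matters. Substituting into the formula above gives
\[
\dim_k Me=[M:S_e].
\]
Taking \(M=fA\), we obtain \(fAe\neq0\) if and only if \(S_e\) occurs as a composition factor of \(fA\).

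No step here is a real obstacle. The only point to state carefully is the convention for right modules: the relevant isomorphism is \(\Hom_A(eA,M)\cong Me\), not \(eM\). With right modules, this correctly yields \(fAe\) for \(M=fA\).
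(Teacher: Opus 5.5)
Your proof is correct, but it is organized differently from the paper's. Both arguments start from $\Hom_A(eA,fA)\cong fAe$. The paper then proves the two implications directly. A nonzero $\varphi\colon eA\to fA$ has kernel inside $e\rad A$, so $\operatorname{Im}\varphi$ has top $S_e$, and $S_e$ is therefore a composition factor of $fA$. Conversely, if $V\subseteq U\subseteq fA$ with $U/V\cong S_e$, projectivity of $eA$ lifts $eA\twoheadrightarrow S_e$ to a nonzero map $eA\to U\hookrightarrow fA$. You instead use projectivity of $eA$ in the form of exactness of $(-)e$. This gives the counting identity $\dim_k Me=[M:S_e]\,\dim_k S_ee$, which says more than the lemma: it identifies $\dim_k fAe$ with the Cartan multiplicity $[fA:S_e]$ once $\dim_k S_ee=\dim_k\End_A(S_e)=1$. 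The paper's route is shorter and avoids multiplicity bookkeeping. Like yours, it never uses that $fA$ is projective, so it also works for an arbitrary module $M$. One small expository point: you say $\dim_k S_ee=1$ is not needed, but your displayed $\dim_k Me=[M:S_e]$ does use it. Writing $\dim_k Me=[M:S_e]\,\dim_k S_ee$ with $\dim_k S_ee\geq 1$ would match your remark.
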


\begin{proof}
	There is a natural isomorphism
	$\Hom_A(eA,fA)\cong fAe$.
	Assume that \(fAe\ne0\), and let
	\(\varphi:eA\to fA\) be a nonzero homomorphism.
	Since \(eA\) is an indecomposable projective module,
	it has a unique maximal submodule
	\(e\rad A=\rad(eA)\).
	The kernel of \(\varphi\) is a proper submodule
	of \(eA\), hence $\ker\varphi\subseteq e\rad A$.
	Therefore
	$\topm(\operatorname{Im}\varphi)
	\cong eA/e\rad A=S_e$.
	Thus \(S_e\) is a composition factor of
	\(\operatorname{Im}\varphi\) and of \(fA\).
	
	Conversely, suppose that \(S_e\) is a composition
	factor of \(fA\). Then there exist submodules
	\(V\subseteq U\subseteq fA\) such that
	\(U/V\cong S_e\).
	Since \(eA\) is projective, the canonical epimorphism
	\(eA\twoheadrightarrow S_e\) lifts to a nonzero
	homomorphism \(eA\to U\).
	Composing this map with the inclusion
	\(U\hookrightarrow fA\), we obtain
	$\Hom_A(eA,fA)\ne0$.
	Therefore \(fAe\ne0\).
\end{proof}

We recall the following result; see \cite{Fuller68,Tach73}.
\begin{lemma}
	\label{thm:nakayama-qf1-criterion}
	Let \(A\) be a Nakayama algebra with \(\domdim A\ge2\). Then the following
	conditions are equivalent:
	\begin{enumerate}[label=\textnormal{(\arabic*)}]
		\item \(A\) is \textup{QF-1}.
		
		\item For every indecomposable projective non-injective 
		\(A\)-module \(P\) and every simple composition factor \(S\) of \(P\),
		the injective envelope \(I(S)\) is projective.
		
		\item For any primitive idempotents \(e,f\) such that \(Ae\) and
		\(fA\) are non-injective, we have \(fAe=0\).
	\end{enumerate}
\end{lemma}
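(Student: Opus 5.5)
The plan is to prove the cycle of implications (1)\(\Rightarrow\)(2)\(\Rightarrow\)(3)\(\Rightarrow\)(1). Morita's criterion \cref{thm:morita-criterion} and the explicit description of indecomposable modules over a Nakayama algebra will do the main work.

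\emph{Step 1: (2)\(\Leftrightarrow\)(3).} Let \(f\) be a primitive idempotent with \(fA\) non-injective, and let \(e\) be a primitive idempotent. By \cref{lem:factor-hom-nakayama}, \(S_e\) is a composition factor of \(fA\) if and only if \(fAe\neq0\). By duality, \(I(S_e)\cong D(Ae)\), so \(I(S_e)\) is projective if and only if \(D(Ae)\) is projective, that is, if and only if \(Ae\) is injective as an \(A^{\mathrm{op}}\)-module. Since \(A\) is finite-dimensional, \(D(Ae)\) is projective exactly when \(Ae\) is injective. Hence (2) says that \(fAe\neq0\) forces \(Ae\) to be injective, which is the contrapositive form of (3). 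This step is formal.

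\emph{Step 2: (2)\(\Rightarrow\)(1).} Since \(\domdim A\ge2\), \cref{thm:morita-criterion} applies, and it suffices to show that every indecomposable \(M\) has \(\domdim M\ge1\) or \(\codomdim M\ge1\). Write \(M=M[a,b)\), a uniserial module with top \(S_a\). If \(P(M)=P_a\) is injective, then \(\codomdim M\ge1\). Otherwise \(P_a\) is projective non-injective, and \(\soc M\) is a composition factor of \(P_a\). By (2), \(I(\soc M)\) is projective. Since \(M\) is uniserial, its socle is simple, so \(I(M)=I(\soc M)\) and therefore \(\domdim M\ge1\).

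\emph{Step 3: (1)\(\Rightarrow\)(2).} Argue by contrapositive. Suppose \(P_a\) is non-injective and some composition factor \(S\) of \(P_a\) has non-projective injective envelope. Take \(M\) to be the quotient of \(P_a\) whose socle is \(S\); this is \(M[a,b)\) with \(b\) chosen as the position of \(S\) in the composition series of \(P_a\). Then \(P(M)=P_a\) is not injective, so \(\codomdim M=0\). Also \(I(M)=I(S)\) is not projective, so \(\domdim M=0\). By \cref{thm:morita-criterion}, \(A\) is then not QF-1.

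\emph{Expected obstacle.} The only real point is the correct use of Morita's criterion in the Nakayama setting, namely the identification \(I(M)=I(\soc M)\) for uniserial \(M\), together with the realization of every composition factor of \(P_a\) as the socle of a quotient of \(P_a\). Both follow directly from uniseriality. Since the paper cites the result to \cite{Fuller68,Tach73}, the proof can be kept short along these lines.
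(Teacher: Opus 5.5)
Your proof is correct and takes essentially the same route as the paper: you translate (2)$\Leftrightarrow$(3) formally via $I(S_e)\cong D(Ae)$ and \cref{lem:factor-hom-nakayama}, and you prove both directions between (1) and (2) with Morita's criterion, using $I(M)\cong I(\soc M)$ for uniserial $M$ and the quotient of $P$ whose socle is the given composition factor $S$. (Your opening announces a cycle (1)$\Rightarrow$(2)$\Rightarrow$(3)$\Rightarrow$(1), but the steps you actually prove are (2)$\Leftrightarrow$(3), (2)$\Rightarrow$(1) and (1)$\Rightarrow$(2); these still establish all the equivalences.)
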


\begin{proof}
	\noindent
	\textnormal{(1)} \(\Rightarrow\) \textnormal{(2)}.
	Let \(P\) be an indecomposable projective non-injective module,
	and let \(S\) be a simple composition factor of \(P\). Choose \(r\) such that
	\(\rad^rP/\rad^{r+1}P\cong S\), and set
	\(M=P/\rad^{r+1}P\). The canonical epimorphism
	\(P\twoheadrightarrow M\) has kernel contained in \(\rad P\), hence is a
	projective cover. Thus \(P(M)\cong P\), so \(\codomdim_A M=0\).
	Moreover, \(M\) is uniserial and \(\soc M\cong S\).
	By \cref{thm:morita-criterion}, \(\domdim_A M\ge1\), and hence \(I(M)\)
	is projective. Since \(\soc M\) is essential in \(M\),
	\(I(M)\cong I(\soc M)\cong I(S)\).
	Therefore \(I(S)\) is projective.
	
	\medskip
	
	\noindent
	\textnormal{(2)} \(\Rightarrow\) \textnormal{(1)}.
	Let \(M\) be indecomposable. If \(P(M)\) is injective, then
	\(\codomdim_A M\ge1\). Otherwise, \(P(M)\) is an indecomposable
	projective non-injective module. Since \(M\) is uniserial,
	\(\soc M\) is simple and is a composition factor of \(P(M)\).
	Hence \(I(\soc M)\) is projective by \textnormal{(2)}. Since
	\(I(M)\cong I(\soc M)\), we have \(\domdim_A M\ge1\).
	Thus \cref{thm:morita-criterion} yields \textnormal{(1)}.
	
	\medskip
	
	\noindent
	\textnormal{(2)} \(\Leftrightarrow\) \textnormal{(3)}.
	Let \(e\) be a primitive idempotent. By standard duality,
	\(I(S_e)\cong D(Ae)\); hence \(I(S_e)\) is projective
	if and only if the \(A^\mathrm{op}\)-module \(Ae\) is injective. By
	\cref{lem:factor-hom-nakayama}, \(S_e\) is a composition factor of
	\(fA\) if and only if \(fAe\ne0\). Thus \textnormal{(2)} is equivalent to
	\[
	fA\text{ non-injective and }fAe\ne0
	\quad\Longrightarrow\quad
	Ae\text{ injective},
	\]
	whose contrapositive is precisely \textnormal{(3)}.
\end{proof}

Cruz and Marczinzik conjectured that a higher Auslander algebra
\(A\) of even global dimension \(g\) is \textup{QF-1} if
\(\pdim_A\tau_g(DA)\le g-1\); see
\cite[Conjecture~4.8]{CM}.
The assumption that \(g\) is even cannot be omitted:
the Nakayama algebra with Kupisch series \((3,3,3,4)\)
is a higher Auslander algebra of global dimension \(5\)
with \(\pdim_A\tau_5(DA)=3\), but is not \textup{QF-1}; see \cite[Example~4.9]{CM}. We prove this conjecture for Nakayama algebras.

\begin{theorem}
	\label{thm:main}
	\label{thm:qf1-red-nonleaf}
	Let \(A\) be a higher Auslander Nakayama algebra with
	global dimension \(g=2m\), where \(m\ge 1\).
	Then the following are equivalent:
	\begin{enumerate}[label=\textnormal{(\arabic*)}]
		\item \(A\) is \textup{QF-1};
		\item \(\pdim_A\tau_g(DA)\le g-1\);
		\item For every red vertex \(S_{i-1}\) in \(R(A)\),
		the vertex \(S_{i+1}\) is not a leaf.
	\end{enumerate}
\end{theorem}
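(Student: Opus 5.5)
The plan is to prove (1)\(\Rightarrow\)(2), (2)\(\Rightarrow\)(3), and (3)\(\Rightarrow\)(1). The first implication is the necessity result of Cruz and Marczinzik \cite{CM} and needs no Nakayama-specific input. Since \(A\) is higher Auslander with \(g=2m\ge2\), we have \(\domdim A\ge2\). Hence \cref{thm:morita-criterion} and \cref{thm:nakayama-qf1-criterion} are available.

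For (3)\(\Rightarrow\)(1), I will verify condition (2) of \cref{thm:nakayama-qf1-criterion}. Let \(P=P_i\) be indecomposable projective non-injective. By \cref{prop:radical-control}, every composition factor of \(\rad P_i\) has projective injective envelope, so only the top \(S_i\) remains. By \cref{lem:Shen-basic}(1), \(S_{i-1}\) is red, so hypothesis (3) says \(S_{i+1}\) is not a leaf. Then \cref{lem:Shen-basic}(3) gives that \(I(S_i)\) is projective. Thus every composition factor of \(P\) has projective injective envelope, and \(A\) is QF-1.

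For (2)\(\Rightarrow\)(3), let \(S_{i-1}\) be red. By \cref{lem:Shen-basic}(2), \(F(i)=F(i-1)\), hence \(F(i)>F(i-2)\) fails in the right way. More directly, \cref{lem:Shen-basic}(1) applied to \(P_i\) says \(P_i\) is injective iff \(S_{i-1}\) is black, so \(P_i=fA\) is non-injective. Now \cref{cor:CM-top-control} applies, using \(g\ge2\) and hypothesis (2), and shows that the \(A^{\mathrm{op}}\)-module \(Af\) is injective. Equivalently, \(I(S_i)\cong D(Af)\) is projective. By \cref{lem:Shen-basic}(3), \(S_{i+1}\) is then not a leaf.

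To close the cycle, (3)\(\Rightarrow\)(1) is as above, and (1)\(\Rightarrow\)(2) is cited from \cite{CM}.

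The main work is already packaged in \cref{prop:radical-control}, which in turn rests on \cref{thm:strict-decrease} and \cref{prop:noncyclic-simple-dimensions}. So I expect the only delicate points to be bookkeeping. One is the index shift between "\(S_{i-1}\) red" and "\(P_i\) non-injective", together with the identification \(I(\topm fA)\cong D(Af)\). The other is making sure that condition (2) of \cref{thm:nakayama-qf1-criterion} includes the top of \(P\), which the radical control does not cover and hypothesis (3) must supply. If one preferred not to rely on the citation for (1)\(\Rightarrow\)(2), proving it directly from the Nakayama structure would be the genuine obstacle. I would not attempt that and would simply invoke the necessity theorem of \cite{CM}.
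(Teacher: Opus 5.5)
Your proof is correct and uses exactly the paper's ingredients. The only difference is the arrangement: you close the cycle $(1)\Rightarrow(2)\Rightarrow(3)\Rightarrow(1)$, obtaining $(2)\Rightarrow(3)$ directly from \cref{cor:CM-top-control} and \cref{lem:Shen-basic}(3), whereas the paper proves $(1)\Leftrightarrow(2)$ and $(1)\Leftrightarrow(3)$ separately, so it invokes \cref{prop:radical-control} twice. Also delete the stray remark that ``$F(i)>F(i-2)$ fails in the right way'': it is meaningless, and the argument via \cref{lem:Shen-basic}(1) that follows it is what actually does the work.
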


\begin{proof}
	\((1)\Rightarrow(2)\) follows from
	\cite[Corollary 4.3]{CM}.
	
	\((2)\Rightarrow(1)\).
	Let \(P\) be an indecomposable projective non-injective
	right \(A\)-module, and choose a primitive idempotent
	\(f\in A\) such that \(P\cong fA\).
	By \cref{cor:CM-top-control}, \(Af\) is injective.
	Hence \(I(\topm P)\cong D(Af)\) is projective.
	By \cref{prop:radical-control}, the injective envelope
	of every simple composition factor of \(\rad P\)
	is also projective.
	Thus the injective envelope of every simple composition
	factor of \(P\) is projective.
	By \cref{thm:nakayama-qf1-criterion},
	\(A\) is \textup{QF-1}.
	
	\((1)\Rightarrow(3)\).
	Let \(S_{i-1}\) be red.
	By \cref{lem:Shen-basic}, \(P_i\) is not injective.
	Since \(S_i=\topm P_i\),
	\cref{thm:nakayama-qf1-criterion} implies that
	\(I(S_i)\) is projective.
	By \cref{lem:Shen-basic}, \(S_{i+1}\) is not a leaf.
	
	\((3)\Rightarrow(1)\).
	Let \(P_i\) be an indecomposable projective non-injective
	right \(A\)-module.
	By \cref{lem:Shen-basic}, \(S_{i-1}\) is red,
	so \textnormal{(3)} implies that \(S_{i+1}\) is not a leaf.
	The same lemma gives that \(I(S_i)\) is projective.
	By \cref{prop:radical-control}, the injective envelope
	of every simple composition factor of \(\rad P_i\)
	is also projective.
	Thus the injective envelope of every simple composition
	factor of \(P_i\) is projective.
	By \cref{thm:nakayama-qf1-criterion},
	\(A\) is \textup{QF-1}.
\end{proof}

\begin{remark}
	The algebra \(A\) in \cref{ex:gabriel-a5-radical-square-zero}
	satisfies \(\gldim A=\domdim A=4\).
	Since \(S_3\) is its only red vertex and \(S_0\) is not a leaf
	in \(R(A)\), \cref{thm:qf1-red-nonleaf} implies that
	\(A\) is \textup{QF-1}.
\end{remark}

\section*{Declaration of AI use}
The authors used ChatGPT (OpenAI) to explore proof strategies
and assist with developing and drafting portions of the
mathematical arguments and exposition.
The authors reviewed and revised the material incorporated
into the manuscript and take full responsibility for its
mathematical content, including the statements, proofs,
computations, and references.
\section*{Acknowledgements}
The first author is partially supported by the National Natural
Science Foundation of China (Grant Nos.~12001164 and 12571039),
and the second author is partially supported by the National Natural
Science Foundation of China (Grant Nos.~12571036 and 11801141).

\end{document}